\documentclass[a4paper,12pt]{amsart}
\usepackage{amssymb}
\usepackage{ifthen}
\usepackage{cite}
\usepackage[dvips]{graphicx}
\nonstopmode \numberwithin{equation}{section}
\usepackage{amssymb}
\usepackage{ifthen}
\usepackage{graphicx}
\usepackage{amsmath}
\usepackage[T1]{fontenc} 
\usepackage[utf8]{inputenc}
\usepackage[usenames,dvipsnames]{color}
\usepackage{color}
\usepackage[english]{babel}
\usepackage{fancyhdr}
\usepackage{fancybox}
\usepackage{tikz}

\nonstopmode \numberwithin{equation}{section}
\theoremstyle{plain}
\newtheorem*{theoA}{Theorem A}
\newtheorem*{theoB}{Theorem B}

\newtheorem*{lemA}{Lemma A}
\newtheorem*{lemB}{Lemma B}
\newtheorem*{lemC}{Lemma C}
\newtheorem*{lemD}{Lemma D}

\newtheorem{defi}{Definition}[section]

\newtheorem{conj}{Conjecture}

\theoremstyle{definition}
\newtheorem{defn}{Definition}[section]

\newtheorem{thm}{Theorem}[section]
\newtheorem{prob}{Problem}[section]
\newtheorem{cor}{Corollary}[section]
\newtheorem{ques}{Question}[section]
\newtheorem{prop}{Proposition}[section]
\newtheorem{rem}{Remark}[section]
\newtheorem{lem}{Lemma}[section]

\newcounter{minutes}
\divide\time by 60
\newcounter{hours}
\multiply\time by 60
\addtocounter{minutes}{-\time}

\newcounter {own}
\def\theown {\thesection       .\arabic{own}}

\newenvironment{pf}[1][]{%
 \vskip 3mm
 \noindent
 \ifthenelse{\equal{#1}{}}%
  {{\slshape Proof. }}%
  {{\slshape #1.} }%
 }%
{\qed\bigskip}

\newcounter{alphabet}

\def\be{\begin{equation}}
\def\ee{\end{equation}}

\newcommand{\bee}{\begin{enumerate}}
\newcommand{\eee}{\end{enumerate}}

\newcommand{\blem}{\begin{lem}}
\newcommand{\elem}{\end{lem}}
\newcommand{\bthm}{\begin{thm}}
\newcommand{\ethm}{\end{thm}}
\newcommand{\bcor}{\begin{cor}}
\newcommand{\ecor}{\end{cor}}
\newcommand{\beg}{\begin{examp}}
\newcommand{\eeg}{\end{examp}}
\newcommand{\begs}{\begin{examples}}
\newcommand{\eegs}{\end{examples}}

\newcommand{\bdefn}{\begin{defn}}
\newcommand{\edefn}{\end{defn}}

\newcommand{\bprob}{\begin{prob}}
\newcommand{\eprob}{\end{prob}}
\newcommand{\bei}{\begin{itemize}}
\newcommand{\eei}{\end{itemize}}

\newcommand{\bcon}{\begin{conj}}
\newcommand{\econ}{\end{conj}}
\newcommand{\bcons}{\begin{conjs}}
\newcommand{\econs}{\end{conjs}}
\newcommand{\bprop}{\begin{prop}}
\newcommand{\eprop}{\end{prop}}
\newcommand{\br}{\begin{rem}}
\newcommand{\er}{\end{rem}}
\newcommand{\brs}{\begin{rems}}
\newcommand{\ers}{\end{rems}}
\newcommand{\bo}{\begin{obser}}
\newcommand{\eo}{\end{obser}}
\newcommand{\bos}{\begin{obsers}}
\newcommand{\eos}{\end{obsers}}
\newcommand{\bpf}{\begin{pf}}
\newcommand{\epf}{\end{pf}}
\newcommand{\ba}{\begin{array}}
\newcommand{\ea}{\end{array}}
\newcommand{\beq}{\begin{eqnarray}}
\newcommand{\beqq}{\begin{eqnarray*}}
\newcommand{\eeq}{\end{eqnarray}}
\newcommand{\eeqq}{\end{eqnarray*}}

\begin{document}
\title{The Bohr Phenomenon  for Close-to-Convex Harmonic Mappings}

\author{Molla Basir Ahamed$^*$}
\address{Molla Basir Ahamed, Department of Mathematics, Jadavpur University, Kolkata-700032, West Bengal, India.}
\email{mbahamed.math@jadavpuruniversity.in}

\author{Partha Pratim Roy}
\address{Partha Pratim Roy, Department of Mathematics, Jadavpur University, Kolkata-700032, West Bengal,India.}
\email{pproy.math.rs@jadavpuruniversity.in}

\subjclass[{AMS} Subject Classification:]{Primary  30C62, 30C80, Secondary 30C45}
\keywords{Analytic, univalent, harmonic functions; starlike, convex, close-to-convex functions; coefficient estimate, growth theorem, Bohr radius.}

\def\thefootnote{}
\footnotetext{ {\tiny File:~\jobname.tex,
printed: \number\year-\number\month-\number\day,
          \thehours.\ifnum\theminutes<10{0}\fi\theminutes }
} \makeatletter\def\thefootnote{\@arabic\c@footnote}\makeatother

\begin{abstract} 
The classical Bohr inequality states that if $f(z)=\sum_{n=0}^{\infty} a_n z^n$ is analytic and $|f(z)|<1$ in the unit disk $\mathbb{D}$, then $\sum_{n=0}^{\infty} |a_n| r^n \le 1$ for $|z|=r \le 1/3$, where $1/3$ is sharp. Extending this to harmonic mappings $f=h+\overline{g}$ is central in geometric function theory due to the co-analytic part $g$. This paper establishes sharp Bohr-type inequalities for two classes of sense-preserving close-to-convex harmonic mappings. Let $\mathcal{H}_0$ be the class of harmonic mappings $f=h+\overline{g}$ in $\mathbb{D}$ normalized by $h(0)=g(0)=h'(0)-1=g'(0)=0$. We introduce:
\[ \mathcal{P}_{\mathcal{H}_0}(M) := \{ f \in \mathcal{H}_0 : \text{Re}(zh''(z)) > -M + |zg''(z)|, \; z \in \mathbb{D}, \; M > 0 \} \]
\[ \mathcal{W}_{\mathcal{H}_0}(\alpha,\beta) := \{ f \in \mathcal{H}_0 : \text{Re}(h'(z) + \alpha zh''(z) - \beta) > |g'(z) + \alpha zg''(z)|, \; z \in \mathbb{D} \} \]
where $\alpha \ge 0$, $\beta < 1$.

We prove generalized Bohr inequalities by replacing the basis $\{r^n\}$ with non-negative continuous functions $\{\varphi_n(r)\}$. The results are proved using sharp coefficient bounds and growth theorems, providing new insights into the Bohr phenomenon for harmonic mappings and subclasses defined by differential inequalities. 
\end{abstract}

\maketitle
\pagestyle{myheadings}
\markboth{M. B. Ahamed, P. P. Roy}{The Bohr Phenomenon  for Close-to-Convex Harmonic Mappings}
\section{\bf Introduction}
Let $\mathbb{D} := \{z \in \mathbb{C} : |z| < 1\}$ denote the open unit disk in the complex plane. Let $\mathcal{B}$ denote the class of analytic functions $f$ on $\mathbb{D}$ satisfying $|f(z)| < 1$ for all $z \in \mathbb{D}$. In 1914, Bohr \cite{Bohr-1914} established a classical inequality for the class $\mathcal{B}$, which has since initiated a major line of research in geometric function theory.
\begin{theoA}\cite{Bohr-1914}
	If $ f(z)=\sum_{n=0}^{\infty}a_nz^n\in\mathcal{B} $, then 
	\begin{equation}\label{e-1.1}
		M_f(r):=\sum_{n=0}^{\infty}|a_n|r^n\leq 1 \;\; \mbox{for}\;\; |z|=r\leq\frac{1}{3},
	\end{equation}
	where $a_k=f^{(k)}(0)/k!$ for $k\geq 0$. The constant $1/3$ is best possible. 
\end{theoA}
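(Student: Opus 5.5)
The plan is the standard two-step argument: a coefficient estimate for $\mathcal{B}$, followed by summing a geometric series.

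\emph{Coefficient bound.} Write $a=|a_0|<1$; the case $|a_0|=1$ forces $f$ to be constant and is trivial. We claim $|a_n|\le 1-a^2$ for every $n\ge 1$. To get it, fix $n$ and let $\omega=e^{2\pi i/n}$. The averaged function
\[
F(z)=\frac1n\sum_{k=0}^{n-1} f(\omega^k z)
\]
is again in $\mathcal{B}$ and equals $a_0+a_nz^n+\cdots$. Put $F(z)=G(z^n)$ with $G\in\mathcal{B}$, $G(0)=a_0$, $G'(0)=a_n$. The Schwarz--Pick lemma at the origin then gives $|G'(0)|\le 1-|G(0)|^2$, which is the claim. (Wiener's argument via $\varphi_{a_0}\circ f$ would serve equally well.)

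\emph{Summation.} For $r\le 1/3$,
\[
\sum_{n=0}^\infty |a_n|r^n\le a+(1-a^2)\frac{r}{1-r}\le a+\frac{1-a^2}{2}.
\]
Since
\[
1-a-\frac{1-a^2}{2}=\frac{(1-a)^2}{2}\ge 0,
\]
the sum is at most $1$.

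\emph{Sharpness.} Take $f_a(z)=\dfrac{a-z}{1-az}$ with $a\in(0,1)$. Here $a_0=a$ and $|a_n|=(1-a^2)a^{n-1}$ for $n\ge1$, so
\[
M_{f_a}(r)=a+\frac{(1-a^2)r}{1-ar}.
\]
This exceeds $1$ exactly when $(1+a)r>1-ar$, i.e. when $r>\dfrac{1}{1+2a}$. For a given $r>1/3$, choosing $a$ close enough to $1$ makes $\dfrac{1}{1+2a}<r$, so the inequality fails. Hence $1/3$ cannot be improved.

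The only step with real content is the coefficient bound $|a_n|\le 1-|a_0|^2$, obtained by the rotation-averaging reduction to Schwarz--Pick. Everything after it is elementary algebra.
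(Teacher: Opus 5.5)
Your proof is correct and complete. The paper gives no proof of Theorem A; it is quoted from Bohr's 1914 paper as background, so there is no in-paper argument to compare yours with. What you wrote is the standard proof of the sharp constant: the coefficient inequality $|a_n|\le 1-|a_0|^2$ via rotation-averaging and Schwarz--Pick, then a geometric-series estimate with slack $(1-|a_0|)^2/2$, then the automorphisms $(a-z)/(1-az)$ with $a\to 1^-$ for sharpness.

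Within the paper, Theorem A is the case $p=1$, $\varphi_n(r)=r^n$ of Theorem B. Your summation step is exactly that theorem's threshold condition $\varphi_0(r)>\frac{2}{p}\sum_{n\ge1}\varphi_n(r)$, which here reads $1>2r/(1-r)$ and first fails at $R=1/3$.

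Your argument also has the same skeleton as the paper's proofs of Theorems~\ref{Thm-2.1} and~\ref{Thm-2.2}: a sharp coefficient bound, a comparison with a target quantity, and an extremal function. There are two differences. In those theorems the target is $d(f(0),\partial f(\mathbb{D}))$, bounded below through the growth estimates of Lemmas B and D, rather than the constant $1$. And sharpness there is witnessed by one fixed extremal map instead of a limiting family.

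Two small points. With the paper's definition of $\mathcal{B}$ ($|f|<1$ in $\mathbb{D}$), the case $|a_0|=1$ cannot arise, so it needs no separate mention. Also, the full strength of $|a_n|\le 1-|a_0|^2$ is not needed. Rotate so that $a_0\ge 0$ and apply Carath\'eodory's lemma to $1-f$ to get $|a_n|\le 2(1-|a_0|)$. This already gives $|a_0|+2(1-|a_0|)\,r/(1-r)\le 1$ precisely for $r\le 1/3$.
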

The constant $1/3$ is known as the \textit{Bohr radius} and the inequality in \eqref{e-1.1} is called the \textit{Bohr inequality} for the class $\mathcal{B}$.\vspace{1.2mm}

Following Dixon's seminal formulation \cite{Dixon & BLMS & 1995}, which established a fundamental link between the classical Bohr inequality and Banach algebras satisfying the von Neumann inequality, the Bohr phenomenon has undergone extensive generalization for various function classes and abstract functional settings (see, e.g., \cite{Ponnusmy-Survey}). Recent investigations in geometric function theory have increasingly focused on the critical role played by the initial term $|f(0)|$ within the majorant series $M_f(r)$, demonstrating that its modification fundamentally alters the resulting Bohr radius. For instance, substituting the initial term $|f(0)|$ with the power variant $|f(0)|^p$ for $0 < p \le 2$ yields a sharp generalized Bohr radius of $p/(p+2)$ \cite{Liu-Ponnusamy-PAMS-2021}. Furthermore, Alkhaleefah \emph{et al.} \cite{Alkhaleefah-Kayumov-Ponnusamy-PAMS-2019} proved that replacing the static initial value $|f(0)|$ with the local functional value $|f(z)|$ shifts the classical radius $1/3$ to the sharp lower bound $\sqrt{5}-2$.\vspace{1.2mm}

 The generalization of Bohr’s theorem remains a highly active area of modern analysis. Over the past few decades, investigations on the Bohr inequality have expanded for various functional and geometric settings. For instance, the Bohr property has been established for bases of holomorphic functions \cite{aizenberg-2001, Aytuna-Djakov-BLMS-2013}, extended to abstract Banach algebras \cite{Paulsen-PLMS-2002}, and evaluated for holomorphic mappings taking values in homogeneous balls \cite{Hamada-IJM-2009}. In the setting of several complex variables, Galicer \textit{et al.} \cite{Galicer-Mansilla-Muro-TAMS-2020} introduced and analyzed the mixed Bohr radius.\vspace{1.2mm}
 
 A broader account of recent advancements and multi-author variations of these inequalities can be found throughout the literature (see, e.g., \cite{Ahamed-CMFT-2022, Ahamed-Roy-PMS-2025, Ahamed-Allu-MS-2023, Ahamed-Ahammed-Hamada-MAS-2026, Ahamed-Allu-Halder-CRM-2025, Ahamed-Rudrani-Sharma-JCMA-2025, Ahamed-Allu-Halder-BSM-2025, Ahammed-Ahamed-Roy-Filomat-2026, Ahamed-CVEE-2024, Ahamed-Ahammed-MJM-2024, Ahamed-Allu-BMMSS-2022, Ahamed-Allu-CMB-2023, Ahamed-Allu-Halder-CVEE-2023, Aha-Aha-CMFT-2023, Aha-Allu-RMJ-2022, Alkhaleefah-Kayumov-Ponnusamy-PAMS-2019, Allu-Arora-JMAA-2022, Allu-CMB-2022, Das-JMAA-2022, Lata-Singh-PAMS-2022, S. Kumar-PAMS-2022, Kumar-JMAA-2023,Ali-Abdul-NG-CVEE-2016,Ponn-Starkov-JMAA-2024} and the references therein).\vspace{1.2mm} 
 
 The study of the Bohr inequality within the framework of complex-valued harmonic mappings has emerged as a deeply compelling and rapidly expanding area of geometric function theory over the past decade \cite{Abu-Muhanna-CVEQ-2010, Abu-Muhanna-JMAA-2014, Allu-Halder-Indag-2022, Ahamed-Allu-Halder-AMP-2021}. A complex-valued harmonic mapping $f$ defined in the open unit disk $\mathbb{D} $ admits a unique canonical representation $f = h + \overline{g}$, where $h$ and $g$ are analytic functions known respectively as the analytic and co-analytic parts of $f$ \cite{Allu-Halder-Indag-2022, Ahamed-Allu-Halder-AMP-2021}. Unlike purely analytic functions, harmonic mappings require distinct handling due to the presence of the co-analytic component, making the determination of their classical and refined Bohr radii a challenging open problem for many subclasses \cite{Allu-Halder-Indag-2022}.\vspace{1.2mm}
 
 The foundational investigation into this domain was spearheaded by Abu-Muhanna \cite{Abu-Muhanna-CVEQ-2010}, who first introduced Bohr's phenomenon to bounded harmonic classes and subordination frames. This framework was later extended significantly by Abu-Muhanna\emph{et al.} \cite{Abu-Muhanna-JMAA-2014}, who explored the sharp Bohr radius for various subordinating families of bounded harmonic mappings.\vspace{1.2mm}
 
 More recently, research has shifted toward establishing specialized Bohr-type inequalities for structurally defined geometric subclasses of harmonic mappings \cite{Allu-Halder-Indag-2022, Ahamed-Allu-Halder-AMP-2021}. For instance, Allu and Halder \cite{Allu-Halder-Indag-2022} evaluated the Bohr phenomenon for sense-preserving harmonic mappings whose analytic parts belong to Ma-Minda type convex classes or convex families defined with respect to conjugate points. Concurrently, Ahamed \emph{et al.} \cite{Ahamed-Allu-Halder-AMP-2021} successfully determined the sharp Bohr-Rogosinski radius, refined Bohr radius, and improved Bohr radius versions incorporating area measures for specific close-to-convex harmonic mappings. These ongoing advancements continue to provide deep insights into how the geometric properties of the image domains influence the coefficient bounds and majorant series of harmonic functions \cite{Allu-Halder-Indag-2022, Ahamed-Allu-Halder-AMP-2021}.
 
\begin{defi}\cite{Grigoryan-POTENTIAL-2023}
	Let $A\subset\hat{\mathbb{C}}$ be a set, a function $f: A\rightarrow\mathbb{C}$ is said to be harmonic in an open set $\Omega\subset A,$ provided $f$ is continuous in $\Omega$ and $f$ is twice continuously differentiable in $\Omega\setminus\{\infty\}$ and satisfies the Laplace equation 
	\begin{align*}
		\dfrac{\partial^2{f(z)}}{\partial x^2}+\dfrac{\partial^2{f(z)}}{\partial y^2}=0,\; z=x+iy\in \Omega\setminus\{\infty\}.
	\end{align*}
\end{defi}
A complex-valued function $f = u + iv$ defined on the open unit disk $\mathbb{D} := \{z \in \mathbb{C} : |z| < 1\}$ is called a harmonic mapping if its real and imaginary parts, $u$ and $v$, are real harmonic functions in $\mathbb{D}$. Such a function satisfies the Laplace equation $\Delta f = 4f_{z\bar{z}} = 0$, which ensures that $f$ admits the canonical representation $f = h + \overline{g}$, where $h$ and $g$ are analytic in $\mathbb{D}$. The Jacobian of $f$ is given by $J_f(z) = |h'(z)|^2 - |g'(z)|^2$. According to Lewy's theorem \cite{Lewy-BAMS-1936}, $f$ is locally univalent and sense-preserving in $\mathbb{D}$ if and only if $J_f(z) > 0$ for all $z \in \mathbb{D}$. This condition is equivalent to requiring that $h'(z) \neq 0$ and that the second complex dilation $\omega_f(z) := g'(z)/h'(z)$ satisfies $|\omega_f(z)| < 1$ for all $z \in \mathbb{D}$.\vspace{1.2mm}

Let $\mathcal{H}(\mathbb{D})$ denote the class of all complex-valued harmonic functions $f=h+\overline{g}$ in $\mathbb{D}$ normalized by $h(0)=h'(0)-1=0$ and $g(0)=0$. A function $f\in\mathcal{H}(\mathbb{D})$ belongs to the subclass $\mathcal{H}_{0}$ if $g'(0)=0$. Consequently, every $f=h+\overline{g}\in\mathcal{H}_{0}$ can be expressed in the form
\begin{align}\label{Eq-1.2}
	f(z)=h(z)+\overline{g(z)}
	=z+\sum_{n=2}^{\infty}a_n z^n
	+\overline{\sum_{n=2}^{\infty}b_n z^n}.
\end{align}
Harmonic mappings serve as a powerful analytical tool across various branches of science and engineering, particularly in the study of fluid dynamics \cite{Aleman-2012, Constantin-2017}. Notably, Aleman and Constantin \cite{Aleman-2012} established a profound connection between univalent harmonic mappings and ideal fluid flows. This framework introduced an ingenious technique for solving the incompressible, two-dimensional Euler equations, demonstrating the deep geometric interplay between complex analysis and fluid mechanics (see \cite{Constantin-2017} for a comprehensive review).\vspace{1.2mm}
\subsection{Recent development of Bohr inequality for analytic functions}

In \cite{Kayumov-Ponnusamy-Shakirov-MN-2017}, Kayumov \textit{et al.} presented the Bohr radius for locally univalent planar harmonic mappings. As part of the recent developments in this direction, several improved versions of the Bohr inequality for harmonic mappings were discussed by Evdoridis \textit{et al.} in \cite{Evdoridis-Ponnusamy-Rasila-IM-2019}. Various improved forms of the classical Bohr inequality were investigated by Kayumov and Ponnusamy in \cite{Kayumov-Ponnusamy-CRMath-2018, Kayumov-Ponnusamy-JMAA-2018}. Recently, a generalized form of the Bohr sum was studied by Kayumov \textit{et al.} \cite{Kayumov-Khammatova-Ponnusamy-MJM-2022}, which is described as follows: let $\{\varphi_k(r)\}_{k=0}^{\infty}$ be a sequence of non-negative continuous functions on $[0, 1)$ such that the series 
\[
\varphi_0(r) + \sum_{k=1}^{\infty} \varphi_k(r)
\]
converges locally uniformly for $r \in [0, 1)$.
\begin{theoB}\cite{Kayumov-Khammatova-Ponnusamy-MJM-2022}
	Let $f\in\mathcal{B}, f(z)=\sum_{n=0}^{\infty}a_n z^n,$
	and $p\in(0,2]$. If
	\[
	\varphi_0(r)>
	\frac{2}{p}\sum_{n=1}^{\infty}\varphi_n(r),
	\qquad r\in[0,R),
	\]
	where $R$ is the minimal root of the equation $\varphi_0(r)={2}/{p}\sum_{n=1}^{\infty}\varphi_n(r),$
	then the following sharp inequality holds:
	\[
	A_f(\varphi,p,r)
	=
	|a_0|^p\varphi_0(r)
	+\sum_{n=1}^{\infty}|a_n|\varphi_n(r)
	\leq \varphi_0(r),
	\qquad \text{for all } r\leq R.
	\]
	In the case when 
	\begin{align*}
		\varphi_0(r)
		<
		\frac{2}{p}\sum_{n=1}^{\infty}\varphi_n(r)
	\end{align*}
	in some interval $(R,R+\varepsilon)$, the number $R$ cannot be improved. If the functions $\varphi_n(r)$, $n\geq 0$, are smooth, then the last condition is equivalent to the inequality
	\begin{align*}
		\varphi_0'(R)
		<
		\frac{2}{p}\sum_{n=1}^{\infty}\varphi_n'(R).
	\end{align*}
\end{theoB}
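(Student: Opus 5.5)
The proof rests on the classical coefficient estimate for bounded analytic functions. For $f\in\mathcal{B}$ with $f(z)=\sum_{n=0}^{\infty}a_nz^n$, Wiener's inequality gives
\[
|a_n|\le 1-|a_0|^2, \qquad n\ge 1.
\]
It follows from Schwarz--Pick, or from the standard trick of averaging $f$ over roots of unity to reduce to $|f'(0)|\le 1-|f(0)|^2$. Write $a=|a_0|\in[0,1]$. Then
\[
A_f(\varphi,p,r)\le a^p\varphi_0(r)+(1-a^2)\sum_{n=1}^{\infty}\varphi_n(r),
\]
and it suffices to show that the right-hand side is at most $\varphi_0(r)$ for $r\le R$. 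Equivalently, for $a\in[0,1)$,
\[
\sum_{n=1}^{\infty}\varphi_n(r)\le \varphi_0(r)\,\frac{1-a^p}{1-a^2}.
\]
We first check that $\psi(a)=(1-a^p)/(1-a^2)$ has infimum $p/2$ on $[0,1)$ when $0<p\le 2$. Set $t=a^2$ and $q=p/2\in(0,1]$; then $\psi=(1-t^q)/(1-t)$. Since $t\mapsto t^q$ is concave with slope $q$ at $t=1$, we have $1-t^q\ge q(1-t)$, so $\psi\ge q$, with the limit $q$ attained as $t\to1$. The hypothesis $\varphi_0(r)\ge \frac{2}{p}\sum_{n\ge1}\varphi_n(r)$ on $[0,R]$ (by continuity at $R$) is therefore exactly what is needed. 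The case $a=1$ is trivial, since then $f$ is a unimodular constant and only the first term survives.

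For sharpness we test the Möbius functions $f_a(z)=(a-z)/(1-az)$ with $a\in(0,1)$. Their coefficients are $a_0=a$ and $a_n=-(1-a^2)a^{n-1}$ for $n\ge1$. Then
\[
A_{f_a}-\varphi_0=-(1-a^p)\varphi_0(r)+(1-a^2)\sum_{n\ge1}a^{n-1}\varphi_n(r).
\]
Divide by $1-a^2$ and let $a\to1^-$. Since $(1-a^p)/(1-a^2)\to p/2$, and $\sum a^{n-1}\varphi_n(r)\to\sum\varphi_n(r)$ by monotone convergence, the normalized expression tends to $\sum\varphi_n(r)-\frac p2\varphi_0(r)$. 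This limit is positive for $r\in(R,R+\varepsilon)$ by assumption, so for $a$ close to $1$ the inequality fails at such $r$. Hence $R$ cannot be improved.

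For the smooth case, let $\Phi(r)=\varphi_0(r)-\frac2p\sum\varphi_n(r)$. Then $\Phi(R)=0$ and $\Phi>0$ on $[0,R)$. Since $\Phi$ is smooth, $\Phi'(R)<0$ forces $\Phi<0$ just to the right of $R$, which is the condition used above. This presumes termwise differentiation of $\sum\varphi_n$ is justified, for example by locally uniform convergence of $\sum\varphi_n'$.

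The main obstacle is the step where $a\to1$: the bound $\psi\ge p/2$ is not attained, so sharpness must come from a limiting family rather than a single extremal function.
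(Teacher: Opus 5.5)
The paper does not prove Theorem~B; it quotes it from Kayumov--Khammatova--Ponnusamy. Your argument is correct and is essentially the standard proof from that source: Wiener's bound $|a_n|\le 1-|a_0|^2$, the inequality $1-a^p\ge \frac{p}{2}(1-a^2)$ for $p\in(0,2]$, and the M\"obius family $(a-z)/(1-az)$ with $a\to1^-$ for sharpness.

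One caveat concerns the final sentence of the statement. You prove only that $\Phi'(R)<0$ forces $\Phi<0$ on some interval $(R,R+\varepsilon)$. The reverse implication is false in general. Take $p=2$, $\varphi_1(r)=r$, $\varphi_n\equiv 0$ for $n\ge 2$, and $\varphi_0(r)=r+(\tfrac12-r)^3$, which is positive and smooth on $[0,1)$. Then $\Phi(r)=(\tfrac12-r)^3$, so $R=\tfrac12$ and $\Phi<0$ on $(\tfrac12,1)$, but $\Phi'(R)=0$. The claimed ``equivalence'' therefore holds only as sufficiency of the derivative condition, and you should state this rather than leave the converse unaddressed.
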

A more general form of Theorem~B and its applications to the computation of Bohr-type radii associated with various well-known integral operators were presented in \cite{Kumar-2023}. A careful examination of Theorem~B and the techniques employed in its proof suggests that the Bohr phenomenon extends naturally to certain subclasses of harmonic mappings. In preparation for our main results, we begin by recalling the notion of subordination for harmonic mappings.\vspace{1.2mm}

Since, harmonic mappings are natural extension of analytic functions on a simply connected domain, therefore it is natural to raise the following question.
\begin{ques}\label{Q-1}
		Can we establish a sharp Bohr inequality for certain classes of close-to-convex harmonic mappings by replacing the classical basis $\{r^n\}$ of the majorant series with a sequence $\{\varphi_n(r)\}$ of non-negative continuous functions?
\end{ques}
The paper is organized as follows. In Section~2, we present several 
preliminary lemmas, including sharp coefficient estimates and growth 
theorems for the classes $\mathcal{P}_{\mathcal{H}_0}(M)$ and 
$\mathcal{W}_{\mathcal{H}_0}(\alpha,\beta)$, which serve as essential tools for our main results. Section~3 is devoted to the formulation and proof of the generalized Bohr-type inequalities for the class 
$\mathcal{P}_{\mathcal{H}_0}(M)$, wherein the sharp Bohr radius is explicitly determined and the extremal function is provided. Finally, in Section~4, we establish sharp Bohr-type inequalities for the class 
$\mathcal{W}_{\mathcal{H}_0}(\alpha,\beta)$ and identify its corresponding extremal mappings.
\section{\bf {Bohr inequality and its generalization with new basis for the class $\mathcal{P}^{0}_{\mathcal{H}}(M)$}}

In 2013, Ponnusamy \textit{et al.} \cite{Ponnusamy-CVEQ-2013}
considered the following classes
\[
\mathcal{P}_{\mathcal{H}}
=
\left\{
f=h+\overline{g}\in\mathcal{H}
:
\operatorname{Re} h'(z)>|g'(z)|
\ \text{for}\ z\in\mathbb{D}
\right\}
\]
and $\mathcal{P}_{H}^{0}
=
\mathcal{P}_{H}\cap\mathcal{H}_{0}.$ Motivated by the above classes, Ghosh and Allu \cite{Ghosh2020} have studied the
following classes.
\begin{align*}
	\mathcal{P}_{H}^{0}(M)
	=
	\left\{
	f=h+\overline{g}\in\mathcal{H}_{0}
	:
	\operatorname{Re}\!\bigl(zh''(z)\bigr)
	>
	-M+\left|zg''(z)\right|,\; z\in\mathbb{D}
	\ \text{and}\ M>0
	\right\}.
\end{align*}
To investigate the Bohr inequality and the corresponding Bohr radius for the class $\mathcal{P}_{\mathcal H}^{0}(M)$, it is necessary to establish the coefficient bounds and growth estimates for functions within this class. The following result provides these essential estimates.
\begin{lemA}\cite{Ghosh2020}.
	Let $f=h+\overline{g}\in\mathcal{P}_{\mathcal H}^{0}(M)$ for some $M>0$ and be of the form
	\eqref{Eq-1.2}. Then for $n\ge2$,
	\begin{enumerate}
		\item[(i)] $\displaystyle |a_n| + |b_n|\leq \frac{2M}{n(n-1)}; $\\[1mm]
		
		\item[(ii)] $\displaystyle ||a_n| - |b_n||\leq \frac{2M}{n(n-1)};$\\[1mm]
		
		\item[(iii)] $\displaystyle |a_n|\leq \frac{2M}{n(n-1)}.$
	\end{enumerate}
	
	The inequalities are sharp with extremal function $f$ given by
	\[
	f_M'(z)=1-2M\ln(1-z).
	\]
\end{lemA}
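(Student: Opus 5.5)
The plan is to reduce the defining inequality of $\mathcal{P}_{\mathcal H}^{0}(M)$ to a Carathéodory-type condition, and then read off the coefficient bounds from the classical estimate for functions with positive real part.

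Set $H(z)=zh''(z)$ and $G(z)=zg''(z)$. Both are analytic in $\mathbb{D}$ with $H(0)=G(0)=0$, and
\[
H(z)=\sum_{n=2}^{\infty}n(n-1)a_nz^{n-1},\qquad G(z)=\sum_{n=2}^{\infty}n(n-1)b_nz^{n-1}.
\]
The hypothesis reads $\operatorname{Re}H(z)+M>|G(z)|$. Fix any $\epsilon\in\mathbb{C}$ with $|\epsilon|=1$ and put
\[
F_\epsilon(z)=1+\frac{H(z)+\epsilon G(z)}{M}.
\]
Then $F_\epsilon(0)=1$ and
\[
\operatorname{Re}F_\epsilon(z)\ge 1+\frac{\operatorname{Re}H(z)-|G(z)|}{M}>0,
\]
so $F_\epsilon$ belongs to the Carathéodory class. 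The classical bound $|c_k|\le 2$ for its Taylor coefficients gives, with $k=n-1\ge1$,
\[
\frac{n(n-1)}{M}\,|a_n+\epsilon b_n|\le 2 .
\]

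For (i), I choose $\epsilon$ so that $\epsilon b_n$ has the same argument as $a_n$. Then $|a_n+\epsilon b_n|=|a_n|+|b_n|$, which yields $|a_n|+|b_n|\le 2M/(n(n-1))$. If $a_n=0$ or $b_n=0$, any $\epsilon$ works. Parts (ii) and (iii) follow immediately from (i), since $\bigl||a_n|-|b_n|\bigr|\le|a_n|+|b_n|$ and $|a_n|\le|a_n|+|b_n|$. The only real step is therefore the rotation trick with $\epsilon$, which allows the $\epsilon$-dependent Carathéodory bound to control $|a_n|+|b_n|$ simultaneously.

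For sharpness, take $g\equiv0$ and
\[
h'(z)=f_M'(z)=1-2M\ln(1-z).
\]
Then $zh''(z)=2Mz/(1-z)$, and $\operatorname{Re}\bigl(2Mz/(1-z)\bigr)>-M$ in $\mathbb{D}$, since $\operatorname{Re}\bigl((1+z)/(1-z)\bigr)>0$. So $f_M\in\mathcal{P}_{\mathcal H}^{0}(M)$. Expanding, $h'(z)=1+2M\sum_{k\ge1}z^k/k$, which gives $a_n=2M/(n(n-1))$ and $b_n=0$. Hence equality holds in all three inequalities for every $n\ge2$.
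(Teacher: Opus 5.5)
Your proof is correct and complete. The paper does not prove Lemma A itself but imports it from Ghosh and Allu, and your argument is the standard route for classes of this type: $h+\epsilon g$ satisfies $\operatorname{Re}\bigl(z(h+\epsilon g)''(z)\bigr)>-M$ for every $|\epsilon|=1$, so Carath\'eodory's bound gives $|a_n+\epsilon b_n|\le 2M/(n(n-1))$, aligning the arguments of $a_n$ and $\epsilon b_n$ yields (i), and (ii), (iii) follow at once, with the sharpness check via $g\equiv 0$, $zh''(z)=2Mz/(1-z)$, $a_n=2M/(n(n-1))$ also correct.
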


\begin{lemB}\cite{Ghosh2020}
	Let $f\in\mathcal{P}_{\mathcal H}^{0}(M)$. Then
	\begin{align}
		|z|
		+
		2M\sum_{n=2}^{\infty}
		\frac{(-1)^{\,n-1}|z|^n}{n(n-1)}
		\le
		|f(z)|
		\le
		|z|
		+
		2M\sum_{n=2}^{\infty}
		\frac{|z|^n}{n(n-1)}.
	\end{align}
	Both inequalities are sharp for the function $f_M$ given by
	\[
	f_M(z)
	=
	z
	+
	2M\sum_{n=2}^{\infty}
	\frac{z^n}{n(n-1)}.
	\]
	
\end{lemB}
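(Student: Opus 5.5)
Lemma B is a growth estimate, so I would prove it directly from the coefficient bounds of Lemma A, together with a subordination-type representation to obtain the lower bound.

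\emph{Upper bound.} By the triangle inequality and Lemma A(i),
\[
|f(z)|\le |z|+\sum_{n=2}^{\infty}(|a_n|+|b_n|)|z|^n\le |z|+2M\sum_{n=2}^{\infty}\frac{|z|^n}{n(n-1)}.
\]
The series converges for $|z|\le 1$, since $\sum 1/(n(n-1))=1$, so this part is immediate.

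\emph{Lower bound.} I would follow the standard technique for close-to-convex harmonic classes.

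First, I would use the defining condition to produce an analytic function $F_\epsilon=h+\epsilon g$ with $|\epsilon|=1$. For every such $\epsilon$ we have
\[
\operatorname{Re}(zF_\epsilon'')\ge \operatorname{Re}(zh'')-|zg''|>-M,
\]
so $zF_\epsilon''(z)\prec Mz\cdot\frac{2}{1-z}$. More precisely, $p(z)=1+zF_\epsilon''(z)/M$ has positive real part and $p(0)=1$, hence $zF_\epsilon''(z)=M(p(z)-1)$ is subordinate to $2Mz/(1-z)$.

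Next, I would take a radial segment and a preimage path. Let $\Gamma$ be the preimage under $f$ of the radial segment from $0$ to $f(z)$; this requires univalence, which close-to-convexity of the class provides (Ghosh--Allu). Then
\[
|f(z)|=\int_\Gamma |h'(\zeta)\,d\zeta+\overline{g'(\zeta)\,d\zeta}|\ge\int_\Gamma (|h'(\zeta)|-|g'(\zeta)|)\,|d\zeta|.
\]
So I need the pointwise bound
\[
|h'(\zeta)|-|g'(\zeta)|\ge 1-2M\sum_{n\ge1}\frac{(-1)^{n-1}\rho^n}{n}=1-2M\ln(1+\rho),\qquad \rho=|\zeta|.
\]
To get it, choose $\epsilon$ so that $|h'-\cdots|$ aligns, i.e.\ $|h'(\zeta)|-|g'(\zeta)|=\min_{|\epsilon|=1}|h'(\zeta)+\epsilon g'(\zeta)|$. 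For each fixed $\epsilon$, the analytic function $F_\epsilon'(\zeta)=1+\int_0^\zeta F_\epsilon''$ satisfies
\[
|F_\epsilon'(\zeta)|\ge 1-\int_0^\rho \frac{2M}{1+t}\,dt,
\]
because $|\zeta F''|\le 2M\rho/(1-\rho)$ by subordination. This step needs care: bounding $|F''|$ by $2M/(1-t)$ would give $1+2M\ln(1-\rho)$, which is the \emph{weaker} bound. The sharp lower bound with alternating signs corresponds to $1-2M\ln(1+\rho)$, realized along the negative axis by $f_M$.

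To get the sharp version, I would instead use the coefficient representation. Since $zF_\epsilon''(z)=\sum_n n(n-1)(a_n+\epsilon b_n)z^{n-1}$ is subordinate to $2Mz/(1-z)$, a Rogosinski-type argument on the real part shows that $F_\epsilon'(\zeta)$ takes values in the image of $|\zeta|\le\rho$ under $1-2M\ln(1-w)$. The closest point of that image to $0$ is at $w=-\rho$, which gives $|F_\epsilon'|\ge 1-2M\ln(1+\rho)$. This uses the convexity of $-\ln(1-w)$ as a function of $w$, and is where I would invoke the subordination principle for convex functions: $F_\epsilon'-1\prec -2M\ln(1-z)$, since integrals of subordinate functions to a convex function stay subordinate.

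Finally, integrating over $\Gamma$, with $|d\zeta|\ge d|\zeta|$, gives
\[
|f(z)|\ge\int_0^r(1-2M\ln(1+t))\,dt=r+2M\sum_{n\ge2}\frac{(-1)^{n-1}r^n}{n(n-1)}.
\]
Sharpness follows from $f_M$ at $z=r$ for the upper bound and at $z=-r$ for the lower bound, by direct evaluation.

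The main obstacle is the lower bound. It needs both the univalence of $f$, to justify the path $\Gamma$, and the subordination $F_\epsilon'-1\prec-2M\ln(1-z)$, which gives the sharp pointwise estimate. I also note that the lower bound is only meaningful while $1-2M\ln(1+t)>0$ along the path.
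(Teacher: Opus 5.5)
You have the upper bound (triangle inequality plus Lemma A(i)), the subordination $F_\epsilon'-1\prec-2M\ln(1-z)$, and the sharpness check with $f_M$ right. The lower bound, however, has a genuine gap. Integrating over the preimage $\Gamma$ of the segment $[0,f(z)]$ requires $f$ to be univalent, and close-to-convexity of $\mathcal{P}_{\mathcal H}^{0}(M)$ is only available for $0<M\le 1/\ln 4$. Lemma B is stated for every $M>0$, and Theorem~\ref{Thm-2.1} uses it for $M$ up to $1/(2(\ln 4-1))>1/\ln 4$.

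Beyond $1/\ln 4$ the class really contains non-univalent maps. Indeed, $f_M'(-\rho)=1-2M\ln(1+\rho)$ vanishes at $\rho=e^{1/(2M)}-1<1$, so $f_M$ is not even locally univalent. In that range $\Gamma$ need not exist. The estimate $\int_\Gamma(|h'|-|g'|)\,|d\zeta|\ge\int_0^r(1-2M\ln(1+t))\,dt$ also breaks down once the integrand turns negative, even though the asserted bound stays positive for a further range of $r$. So your closing remark about where the bound is ``meaningful'' is not right.

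Two smaller points. Even for $M\le 1/\ln 4$, you must take $z$ to be a point of minimal $|f|$ on $|z|=r$, because for arbitrary $z$ the segment $[0,f(z)]$ need not lie in $f(\mathbb{D})$. Also, the identity $|h'|-|g'|=\min_{|\epsilon|=1}|h'+\epsilon g'|$ and the claim that $w=-\rho$ gives the closest point to $0$ silently assume $|h'|\ge|g'|$ and $1-2M\ln(1+\rho)>0$.

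The repair uses only what you already have, provided you work with real parts instead of moduli. Write $F_\epsilon'(\zeta)=1-2M\ln(1-\omega(\zeta))$ with $|\omega(\zeta)|\le|\zeta|$. This gives $\operatorname{Re}F_\epsilon'(\zeta)=1-2M\ln|1-\omega(\zeta)|\ge 1-2M\ln(1+|\zeta|)$ for every $|\epsilon|=1$. Choosing $\epsilon$ with $\epsilon g'(\zeta)=-|g'(\zeta)|$ then yields $\operatorname{Re}h'(\zeta)-|g'(\zeta)|\ge 1-2M\ln(1+|\zeta|)$. For $z=re^{i\theta}$, integrate along the radius:
\[
|f(z)|\ge\operatorname{Re}\bigl(e^{-i\theta}f(z)\bigr)=\int_0^r\operatorname{Re}\Bigl(h'(te^{i\theta})+e^{-2i\theta}\,\overline{g'(te^{i\theta})}\Bigr)\,dt\ge\int_0^r\bigl(1-2M\ln(1+t)\bigr)\,dt .
\]
The right-hand side equals $r+2M\bigl(r-(1+r)\ln(1+r)\bigr)=r+2M\sum_{n\ge2}(-1)^{n-1}r^n/(n(n-1))$. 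This proves the lower bound for every $M>0$ and every $z\in\mathbb{D}$. It needs no univalence, no special choice of path, and no sign condition on the integrand.
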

\begin{figure}
	\begin{center}
		\includegraphics[width=0.32\linewidth]{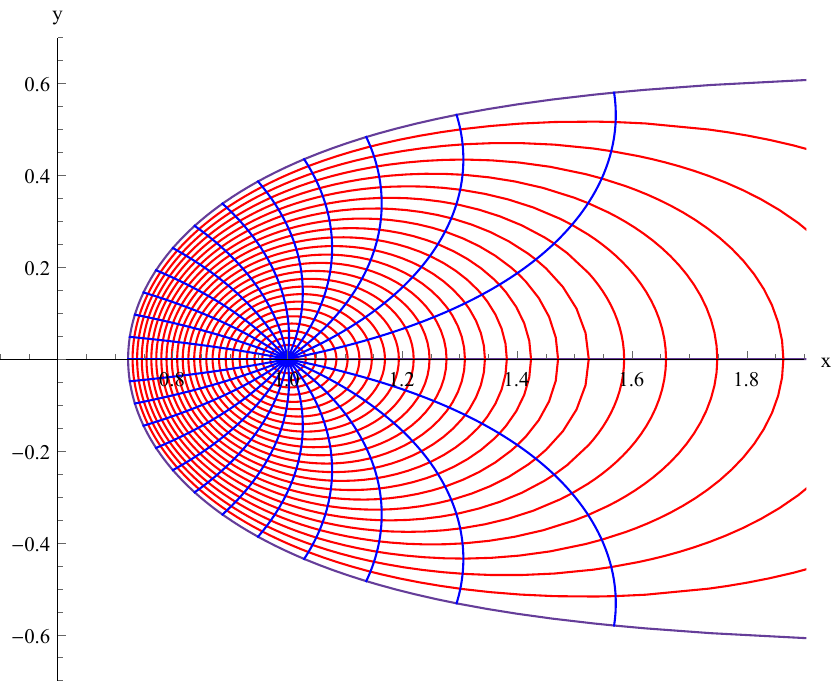}\;\;\;\;\includegraphics[width=0.32\linewidth]{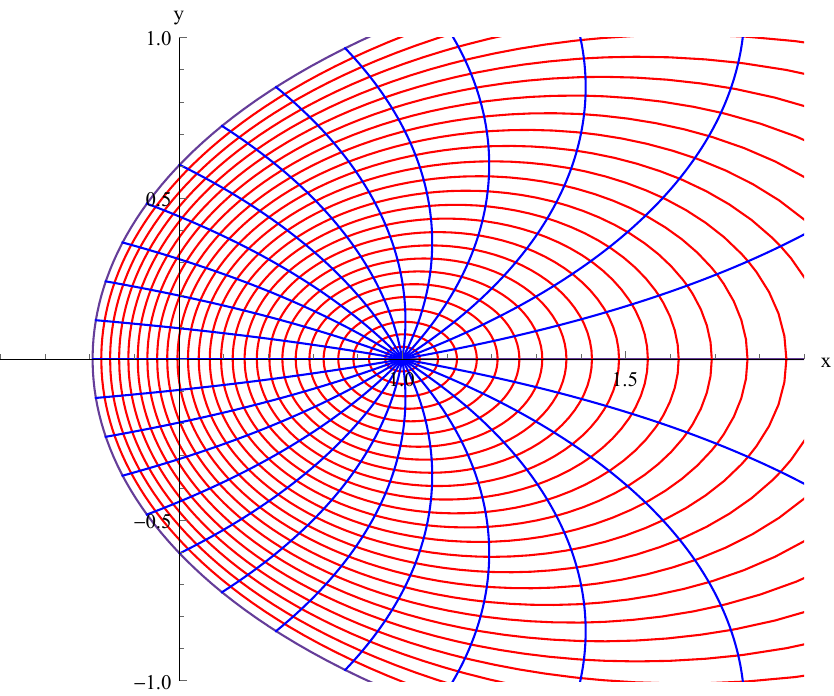}\;\;\;\;\includegraphics[width=0.32\linewidth]{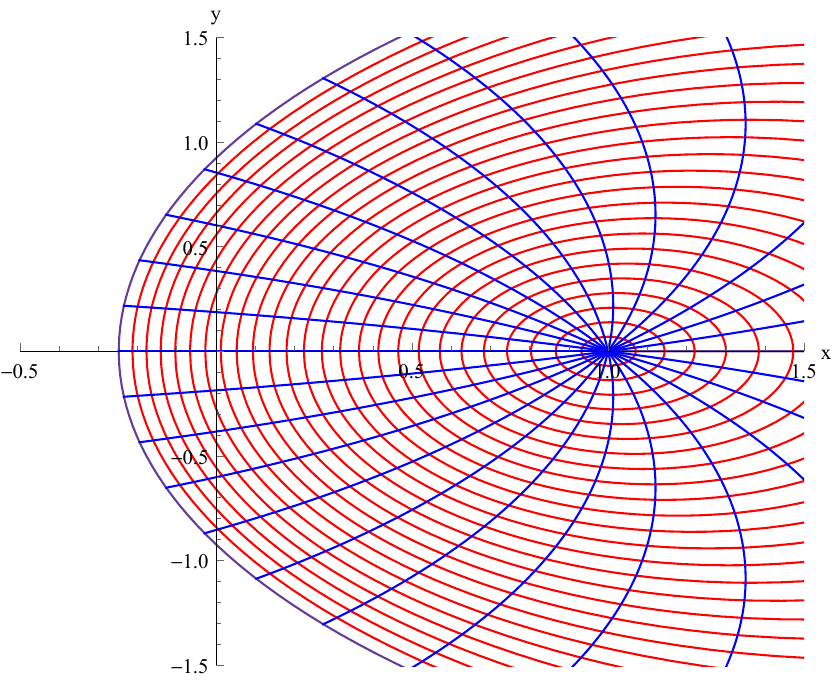}
	\end{center}
	\caption{The figure shows the images of $ f_M^{\prime}(z)=1-2M\, \ln\, (1-z)\in \mathcal{P}_{\mathcal{H}}^{0}(M) $  from left to right, for values of $ M=0.2, 0.5, 0.8 $, respectively.}
\end{figure}

In this section, we present our first main result, which establishes the generalized Bohr inequality for the class $\mathcal{P}^{0}_{\mathcal{H}}(M)$. By applying the coefficient bounds introduced in the previous section, we obtain the following sharp estimate.
\begin{thm}\label{Thm-2.1}
	Let $f \in \mathcal{P}^{0}_{\mathcal{H}}(M)$ be given by \eqref{Eq-1.2} with
	$0<M<{1}/{2(\log 4-1)}$, and let $\{\varphi_n(r)\}$ be a sequence of
	nonnegative increasing functions differentiable in $(0,1)$ such that the
	series $\sum_{n=0}^{\infty}\varphi_n(r)
	$ converges locally uniformly with respect to $r\in[0,1)$, and
	\begin{align}\label{Eq-2.3}
		\sum_{n=2}^{\infty}\frac{\varphi_n(0)}{n(n-1)}
		<
		\frac{1}{2M}+1-2\ln 2.
	\end{align}
	Then the inequality
	\begin{align}\label{Eqnn-2.2}
		B(f,\varphi,r)
		=
		r\varphi_0(r)
		+
		\sum_{n=2}^{\infty}
		\bigl(|a_n|+|b_n|\bigr)\varphi_n(r)
		\leq d\bigl(f(0),\partial f(\mathbb{D})\bigr)
	\end{align}
	holds for all $|z|=r\leq r_f(M)$, where $r_f(M)$ is the unique root
	of the equation
	\begin{align*}
		r\varphi_0(r)
		+
		2M
		\left(
		2\ln 2-1
		+
		\sum_{n=2}^{\infty}\frac{\varphi_n(r)}{n(n-1)}
		\right)
		-1
		=0
	\end{align*}
	in $(0,1)$. The radius $r_f(M)$ is best possible.
\end{thm}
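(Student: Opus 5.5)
The plan follows the standard scheme: bound the distance $d(f(0),\partial f(\mathbb{D}))$ from below, bound the majorant sum from above with the coefficient estimates, and compare the two. Since $f(0)=0$ and $f$ is univalent (sense-preserving close-to-convex), $d(0,\partial f(\mathbb{D}))=\liminf_{|z|\to1}|f(z)|$. By the left inequality in Lemma B, letting $|z|\to 1^-$,
\[
d\bigl(f(0),\partial f(\mathbb{D})\bigr)\ \ge\ 1+2M\sum_{n=2}^{\infty}\frac{(-1)^{n-1}}{n(n-1)} = 1-2M(2\ln 2-1).
\]
This uses $\sum_{n\ge2}\frac{(-1)^{n}}{n(n-1)}=2\ln2-1$, which follows from $\frac{1}{n(n-1)}=\frac{1}{n-1}-\frac1n$ and the alternating harmonic series. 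The hypothesis $M<1/(2(\log 4-1))$ is exactly what makes this lower bound positive.

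On the other side, Lemma A(i) gives $|a_n|+|b_n|\le 2M/(n(n-1))$, so
\[
B(f,\varphi,r)\le r\varphi_0(r)+2M\sum_{n=2}^{\infty}\frac{\varphi_n(r)}{n(n-1)}.
\]
Hence $B(f,\varphi,r)\le d(f(0),\partial f(\mathbb{D}))$ holds whenever $\Phi(r)\le 0$, where
\[
\Phi(r):=r\varphi_0(r)+2M\Bigl(2\ln2-1+\sum_{n=2}^{\infty}\frac{\varphi_n(r)}{n(n-1)}\Bigr)-1 .
\]

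Next I would establish existence and uniqueness of the root. Condition \eqref{Eq-2.3}, multiplied by $2M$, gives exactly $\Phi(0)<0$. Because each $\varphi_n$ is nonnegative and increasing and $r\varphi_0(r)$ is increasing, $\Phi$ is increasing and continuous by local uniform convergence. A sign change in $(0,1)$ requires $\Phi(r)>0$ for $r$ near $1$. This is not automatic, and I expect it to be the main obstacle: one must either assume it implicitly (as the statement does by asserting a root exists) or argue it from the growth of the $\varphi_n$. For uniqueness I would use strict monotonicity, e.g.\ from $r\varphi_0(r)$ being strictly increasing when $\varphi_0\not\equiv0$. Then $\Phi(r)\le0$ exactly on $[0,r_f(M)]$, which gives the inequality for $r\le r_f(M)$.

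For sharpness I would take the extremal $f_M(z)=z+2M\sum_{n\ge2}z^n/(n(n-1))$, with $g\equiv0$, $h'=f_M'=1-2M\ln(1-z)$. Then $\operatorname{Re}(zh'')=\operatorname{Re}\frac{2Mz}{1-z}>-M$, so $f_M\in\mathcal{P}^0_{\mathcal H}(M)$ and $|a_n|+|b_n|=2M/(n(n-1))$. Its image boundary nearest the origin is attained at $z=-1$: by Lemma B the minimum modulus on $|z|=r$ is at $z=-r$, with value $1-2M(2\ln2-1)$ in the limit. So $d(0,\partial f_M(\mathbb{D}))$ equals the lower bound, and $B(f_M,\varphi,r)-d=\Phi(r)>0$ for $r>r_f(M)$, since $\Phi$ is strictly increasing past its root. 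The one point needing care here is justifying that the distance for $f_M$ is exactly this value, i.e.\ that $|f_M(e^{i\theta})|$ is minimized at $\theta=\pi$, which follows from Lemma B's sharpness on the negative real axis.
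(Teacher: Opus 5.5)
Your proposal is correct and follows essentially the paper's proof: the bound $d(f(0),\partial f(\mathbb{D}))\ge 1-2M(2\ln 2-1)$ from Lemma~B, the majorant bound from Lemma~A(i), a monotone auxiliary function (the paper's $L_1$ is your $\Phi$) with $\Phi(0)<0$ coming from \eqref{Eq-2.3}, and sharpness via $f_M$ along $z=-r$. Your worry about the sign change near $1$ is justified. The paper merely asserts $L_1(r)\to+\infty$ as $r\to1^-$, which the hypotheses do not give: for $\varphi_n(r)=\varepsilon r^n$ with $\varepsilon(1+2M)<1-2M(2\ln 2-1)$ one has $\Phi<0$ on all of $[0,1)$, so existence of the root must be read as part of the statement. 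Likewise, uniqueness needs something like $\varphi_0>0$ on $(0,1)$ rather than just $\varphi_0\not\equiv 0$.
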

Specifically, by choosing $\varphi_n(r)=r^n$ for all $(n\geq 0)$, Theorem~\ref{Thm-2.1} immediately yields the following corollary. It is worth noting that, under this particular choice of the weight functions, our general result reduces to the classical setting and consequently recovers a previously established theorem as a special case. Thus, Theorem~\ref{Thm-2.1} may be regarded as a genuine extension of the known result.

\begin{cor}\cite[Theorem 2.9]{Allu-Halder-2021}
		Let \(f\in \mathcal{P}_{\mathcal H}^{0}(M)\) be given by \eqref{Eq-1.2}  with
		$0<M<{1}/{2(\log 4-1)}$. Then the inequality 
		\begin{align*}
			r
			+
			\sum_{n=2}^{\infty}
			\bigl(|a_n|+|b_n|\bigr)r^n	\leq d\bigl(f(0),\partial f(\mathbb{D})\bigr)
		\end{align*} 
		holds for \(|z|=r\le r_1(M)\), where \(r_1(M)\) is the unique root of
	\begin{align}\label{Eqn-2.4}
		r
		+
		2M(r-(1-r)\ln(1-r))
		=1+2M(1-2\ln 2).
	\end{align}
		in \((0,1)\). The radius \(r_1(M)\) is best possible.
\end{cor}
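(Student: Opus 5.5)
Note that this is the Corollary (Theorem 2.9 of Allu--Halder), so the plan is to specialise Theorem~\ref{Thm-2.1} to $\varphi_n(r)=r^n$ for all $n\ge 0$. After that, the only remaining work is to identify the defining equation for $r_f(M)$ with \eqref{Eqn-2.4} and to check the hypotheses.

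\textbf{Hypotheses.} The functions $r^n$ are nonnegative, increasing and differentiable on $(0,1)$. The series $\sum r^n$ converges locally uniformly on $[0,1)$. Since $\varphi_n(0)=0$ for $n\ge 2$, the left side of \eqref{Eq-2.3} equals $0$. So \eqref{Eq-2.3} reduces to $\frac{1}{2M}+1-2\ln 2>0$, i.e.
\[
2M(2\ln 2-1)<1 .
\]
This is exactly the assumption $M<1/(2(\log 4-1))$, because $\log 4=2\ln 2$.

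\textbf{The radius equation.} With $\varphi_0(r)=1$ we have $r\varphi_0(r)=r$, so the majorant in \eqref{Eqnn-2.2} is the one in the corollary. The key computation is the closed form, for $0\le r<1$,
\[
\sum_{n=2}^{\infty}\frac{r^n}{n(n-1)}=r+(1-r)\ln(1-r).
\]
To get it, write $\frac{1}{n(n-1)}=\frac{1}{n-1}-\frac1n$. Then
\[
\sum_{n\ge2}\frac{r^n}{n-1}=-r\ln(1-r),\qquad \sum_{n\ge2}\frac{r^n}{n}=-\ln(1-r)-r,
\]
and subtracting gives the claim. Substituting this into the equation of Theorem~\ref{Thm-2.1} gives
\[
r+2M\bigl(2\ln2-1+r+(1-r)\ln(1-r)\bigr)=1 .
\]
Rearranging, this is $r+2M\bigl(r+(1-r)\ln(1-r)\bigr)=1+2M(1-2\ln 2)$, which should match \eqref{Eqn-2.4} up to the sign convention inside the bracket. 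The printed form $r-(1-r)\ln(1-r)$ in \eqref{Eqn-2.4} appears to be a sign slip, since the series evaluates to $r+(1-r)\ln(1-r)$, which is positive.

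\textbf{Existence and uniqueness of the root.} Let $\Phi(r)$ denote the left side minus the right side of the displayed equation. Then $\Phi(0)=2M(2\ln2-1)-1<0$ by the hypothesis on $M$. As $r\to1^-$, $\Phi(r)\to 1+2M(2\ln2-1+1)-1=4M\ln 2>0$. Moreover $\Phi'(r)=1-2M\ln(1-r)>0$ on $(0,1)$, so $\Phi$ has a unique root $r_1(M)=r_f(M)$ in $(0,1)$.

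\textbf{Sharpness.} This is inherited from Theorem~\ref{Thm-2.1}, with the extremal function $f_M$ of Lemma~B. For that function, $d(f(0),\partial f(\mathbb D))=1+2M(1-2\ln2)$, obtained from the lower growth bound in Lemma~B as $r\to1$.

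The only step needing care is the closed-form summation together with matching the sign conventions to \eqref{Eqn-2.4}; everything else follows directly from Theorem~\ref{Thm-2.1}.
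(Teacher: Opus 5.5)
Your proposal is correct and follows the paper's route exactly: specialize Theorem~\ref{Thm-2.1} to $\varphi_n(r)=r^n$. Your evaluation $\sum_{n\ge2}r^n/(n(n-1))=r+(1-r)\ln(1-r)$ shows that the minus sign printed in \eqref{Eqn-2.4} is a misprint. The same computation gives the plus sign in \eqref{Eqn-2.5}. The tabulated $r_1(M)$ were evidently computed from the misprinted form, e.g.\ $r_1(0.1)\approx 0.709$ rather than $\approx 0.820$, so as printed the ``best possible'' claim would fail and your correction is substantive. Your explicit limit $\Phi(1^-)=4M\ln 2>0$ is also more accurate than the paper's assertion $L_1(r)\to+\infty$, which does not hold for this choice of $\varphi_n$; only positivity of the limit is needed.
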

\begin{cor}
	Let \(f\in \mathcal{P}_{\mathcal H}^{0}(M)\) be given by \eqref{Eq-1.2}  with
	$0<M<{1}/{2(\log 4-1)}$. Let $\varphi_0(r)=1
	\text{and}
	\varphi_n(r)=(n+1)r^n,\quad n\geq 1.$ Then the inequality 
	\begin{align*}
		r
		+
		\sum_{n=2}^{\infty}
		\bigl(|a_n|+|b_n|\bigr)(n+1)r^n	\leq d\bigl(f(0),\partial f(\mathbb{D})\bigr)
	\end{align*} holds for \(|z|=r\le r_2(M)\), where \(r_2(M)\) is the unique root of the equation
	\begin{align}\label{Eqn-2.5}
		r
		+
		2M(r+(1-2r)\ln(1-r))
		=1+2M(1-2\ln 2).
	\end{align}
	in \((0,1)\). The radius \(r_2(M)\) is best possible.
\end{cor}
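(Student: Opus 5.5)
The plan is to obtain the corollary as a direct specialization of Theorem~\ref{Thm-2.1}, with $\varphi_0(r)=1$ and $\varphi_n(r)=(n+1)r^n$ for $n\ge 1$. There are three tasks: check the hypotheses of the theorem, compute the series in its defining equation in closed form, and confirm that this equation reduces to \eqref{Eqn-2.5} with a unique root in $(0,1)$.

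\emph{Hypotheses.} Each $\varphi_n$ is nonnegative and differentiable on $(0,1)$. For $n\ge1$ it is increasing, and $\varphi_0\equiv 1$ is nondecreasing, which is all the argument of Theorem~\ref{Thm-2.1} can use. The series $1+\sum_{n\ge1}(n+1)r^n$ converges uniformly on every $[0,\rho]$ with $\rho<1$. Since $\varphi_n(0)=0$ for $n\ge2$, the left side of \eqref{Eq-2.3} is $0$. So \eqref{Eq-2.3} reduces to
\[
\frac{1}{2M}+1-2\ln 2>0,
\]
which is exactly the assumption $M<1/(2(\log 4-1))$.

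\emph{Closed form of the series.} I would use the partial fraction decomposition
\[
\frac{n+1}{n(n-1)}=\frac{2}{n-1}-\frac1n .
\]
Together with $\sum_{n\ge2} r^n/(n-1)=-r\ln(1-r)$ and $\sum_{n\ge2} r^n/n=-\ln(1-r)-r$, this gives
\[
\sum_{n=2}^{\infty}\frac{(n+1)r^n}{n(n-1)}=r+(1-2r)\ln(1-r).
\]
Substituting into the equation of Theorem~\ref{Thm-2.1} yields
\[
r+2M\bigl(2\ln2-1+r+(1-2r)\ln(1-r)\bigr)-1=0,
\]
which is precisely \eqref{Eqn-2.5}.

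\emph{Uniqueness of the root.} Set
\[
\Phi(r)=r+2M\bigl(r+(1-2r)\ln(1-r)\bigr)-1-2M(1-2\ln2).
\]
At the left endpoint, $\Phi(0)=2M(2\ln2-1)-1<0$ by the bound on $M$. As $r\to1^-$ we have $(1-2r)\ln(1-r)\to+\infty$, so $\Phi(r)\to+\infty$. For monotonicity, since $(1-2r)/(1-r)=1-r/(1-r)$,
\[
\frac{d}{dr}\bigl(r+(1-2r)\ln(1-r)\bigr)=-2\ln(1-r)+\frac{r}{1-r}>0 \quad\text{on } (0,1),
\]
so $\Phi'>0$. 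Hence $\Phi$ has a unique zero $r_2(M)\in(0,1)$.

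The inequality for $r\le r_2(M)$ and the sharpness of $r_2(M)$ then follow directly from Theorem~\ref{Thm-2.1}, with the extremal function $f_M$ of Lemma~B. The only real computation, and the step most prone to error, is the closed-form summation; the partial fraction split above handles it.
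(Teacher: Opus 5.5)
Your proposal is correct and follows the paper's own route: the corollary is Theorem~\ref{Thm-2.1} specialized to $\varphi_0\equiv 1$ and $\varphi_n(r)=(n+1)r^n$, your partial-fraction evaluation $\sum_{n\ge 2}\frac{(n+1)r^n}{n(n-1)}=r+(1-2r)\ln(1-r)$ turns the theorem's root equation into \eqref{Eqn-2.5}, and condition \eqref{Eq-2.3} reduces to the stated bound on $M$. Your direct checks that $\Phi(0)<0$, $\Phi'>0$ and $\Phi\to+\infty$ as $r\to1^-$, and your remark that a constant $\varphi_0$ is enough because the argument only uses $L_1'>0$, supply details the paper leaves implicit.
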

\begin{figure}
	\begin{center}
		\includegraphics[width=0.45\linewidth]{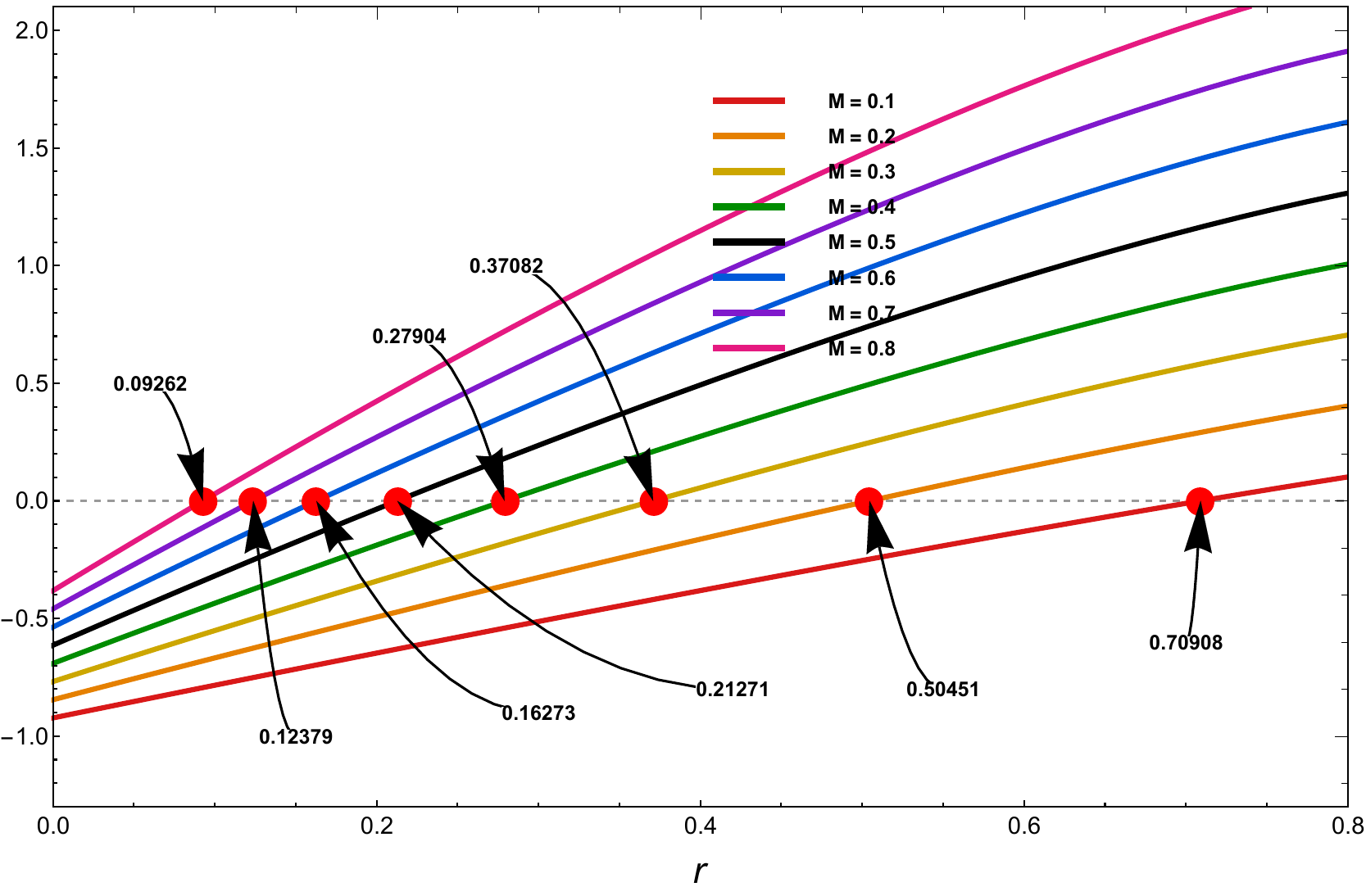}\;\;\;\;\;\;\includegraphics[width=0.45\linewidth]{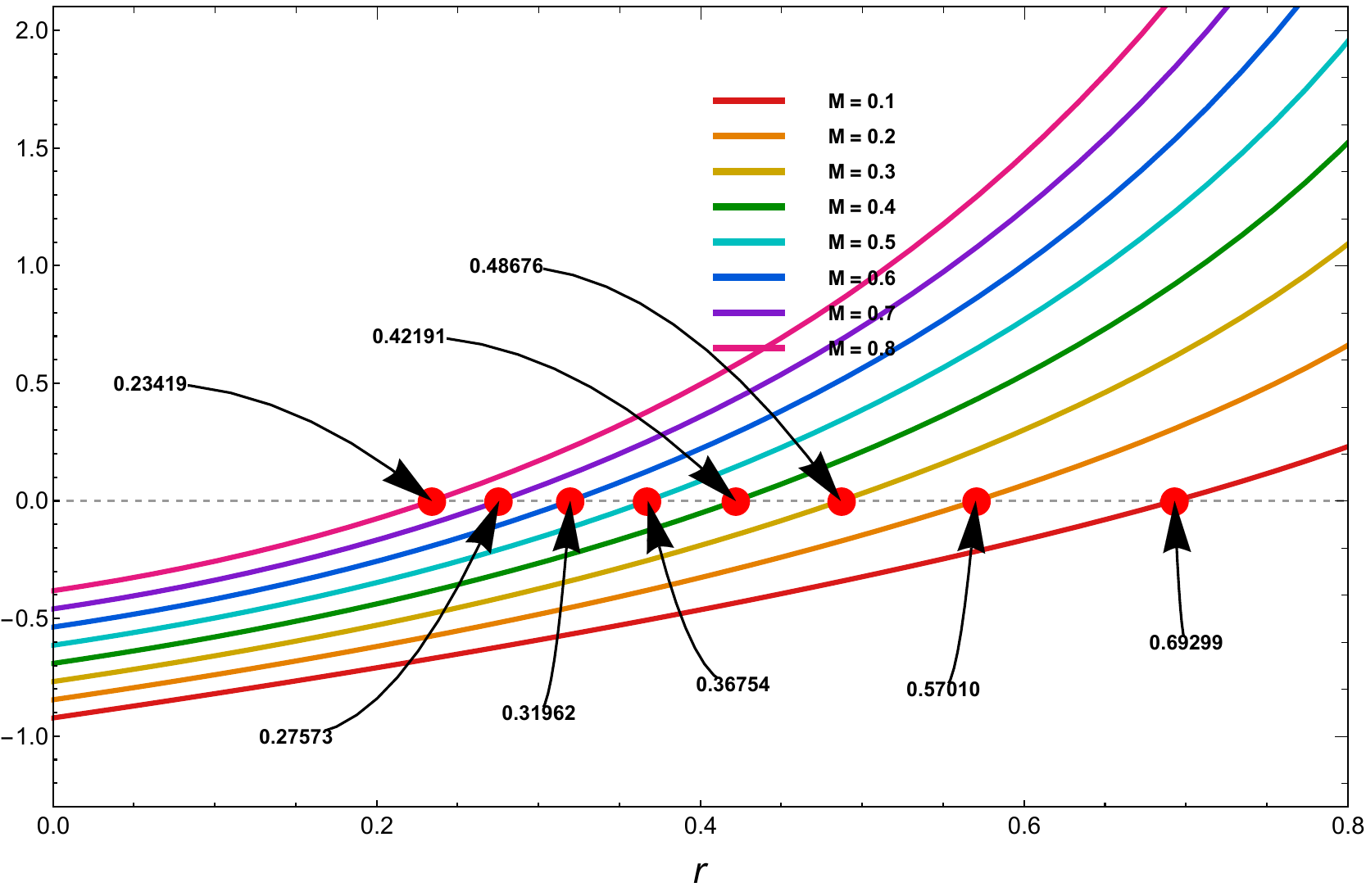}
	\end{center}
	\caption{The figure exhibits the roots of equation \eqref{Eqn-2.4}  and the roots of equation \eqref{Eqn-2.5}  as defined in Table 1 .}
\end{figure}
\begin{table}[ht]
	\centering
	\begin{tabular}{|l|l|l|l|l|l|l|l|l|}
		\hline
		$M$& $0.1 $&$0.2 $& $0.3 $& $0.4$& $0.5 $&$0.6 $&$ 0.7 $ &$0.8$ \\
		\hline
		$r_1(M)$& $0.7090$&$0.5045 $& $0.3708$& $0.2790$& $0.2127$&$0.1627$& $0.1237$& $0.0926$ \\
		\hline
		$ r_2(M)$& $0.6929$&$0.5701 $& $0.4867$& $0.4219$& $0.3675 $&$0.3196 $& $0.2757$& $0.2341$ \\
		\hline
	\end{tabular}\vspace{2.5mm}
	\caption{The table shows the roots of equations \eqref{Eqn-2.4} and \eqref{Eqn-2.5} for various values of $M$.}
	
\end{table}
\begin{proof}[\bf Proof of the Theorem \ref{Thm-2.1}]
	Let $f\in \mathcal{P}_{\mathcal{H}}^{0}(M)$. Then, in view of Lemma B, we have
	\begin{align}\label{Eq-2.2}
		|f(z)|
		\geq
			|z|
		+
		2M\sum_{n=2}^{\infty}
		\frac{(-1)^{\,n-1}|z|^n}{n(n-1)},
		\qquad |z|<1.
	\end{align}
	
From \eqref{Eq-2.2}, we obtain
	\begin{align}\label{Eqn-2.3}
		\liminf_{|z|\to 1}|f(z)|
		\geq
	   1
		+
		2M\sum_{n=2}^{\infty}
		\frac{(-1)^{\,n-1}}{n(n-1)}=1+2M(1-2\ln 2).
	\end{align}
	
	The Euclidean distance between $f(0)$ and the boundary of $f(\mathbb{D})$ is given by
	\begin{align}\label{Eq-2.4}
		d\bigl(f(0),\partial f(\mathbb{D})\bigr)
		=
		\liminf_{|z|\to 1}
		|f(z)-f(0)|.
	\end{align}
	
	Since $f(0)=0$, from \eqref{Eqn-2.3} and \eqref{Eq-2.4}, we obtain
	\begin{align}\label{Eq-2.5}
		d\bigl(f(0),\partial f(\mathbb{D})\bigr)
		\geq
		1+2M(1-2\ln 2).
		\end{align}
		Let $	L_1:[0,1)\to\mathbb{R}$
		be defined by
		\begin{align*}
			L_1(r)
			=
			r\varphi_0(r)
			+
			2M\sum_{n=2}^{\infty}\frac{\varphi_n(r)}{n(n-1)}
			-1
			-
			2M(1-2\ln 2).
		\end{align*}
	In view of \eqref{Eq-2.3}, it is easy to see that
		\begin{align*}
			L_1(0)
			=	2M\sum_{n=2}^{\infty}\frac{\varphi_n(0)}{n(n-1)}
			-1
			-2M(1-2\ln 2)<0
		\end{align*}
		for  $0<M<{1}/{2(\log 4-1)}$. On the other hand, since $	L_1(r)\to +\infty
		\quad \text{as } r\to 1^{-},$
		and
		\begin{align*}
			L_1'(r)
			=
			r\varphi_0'(r)+\varphi_0(r)
			+
			2M
			\sum_{n=2}^{\infty}
			\frac{\varphi_n'(r)}{n(n-1)}
			>0
		\end{align*}
		for $r\in(0,1)$, it follows that $L_1(r)$ is strictly increasing on $(0,1)$.
		
		Since $L_1(0)<0$ and $L_1(r)\to+\infty$ as $r\to1^{-}$, the monotonicity of
		$L_1(r)$ implies that $L_1(r)$ has exactly one zero in $(0,1)$. Let $r_f(M)$ be the
		unique root of $L_1(r)$ in $(0,1)$. Then
	    $L_1(r_f(M))=0,$
		which is equivalent to
		\begin{align}\label{Eq-2.6}
			r_f(M)\varphi_0\bigl(r_f(M)\bigr)
			+
			2M
			\sum_{n=2}^{\infty}
			\frac{\varphi_n\bigl(r_f(M)\bigr)}{n(n-1)}
			=
			1+2M(1-2\ln 2).
		\end{align}
		If \(0 \leq r < r_f(M)\), then, by the monotonicity of \(L_1\), we have $L_1(r)<L_1(r_f(M)).$
		Hence, it follows from \eqref{Eq-2.6} that
			\begin{align}\label{Eq-2.7}
			r\varphi_0(r)
			+
			2M
			\sum_{n=2}^{\infty}
			\frac{\varphi_n(r)}{n(n-1)}
			\leq
			1+2M(1-2\ln 2).
		\end{align}
		
		Using Lemma A together with inequalities \eqref{Eq-2.5} and \eqref{Eq-2.7}, for
		\(0<|z|=r\leq r_f(M)\), we deduce that
		\begin{align*}
				B(f,\varphi,r)\leq r\varphi_0(r)+2M\sum_{n=2}^{\infty}\frac{\varphi_n(r)}{n(n-1)}
				\le d\bigl(f(0),\partial f(\mathbb{D})\bigr).
		\end{align*}
		Thus the inequality in the result is established.
		
		In order to complete the proof, we need only show that \(r_f(M)\) is best possible. To achieve this, we consider the function \(f_{M}\) defined by
	\begin{align*}
		f_M(z)=z+2M\sum_{n=2}^{\infty}\frac{ z^n}{n(n-1)}.
	\end{align*}
		At \(z=-r\), it is easy to see that
		\begin{align*}
				|f_M(-r)|
			=\left|-r+2M\sum_{n=2}^{\infty}\frac{(-r)^n}{n(n-1)}\right|
			= r+2M\sum_{n=2}^{\infty}\frac{(-1)^{\,n-1}r^n}{n(n-1)}.
		\end{align*}
		Therefore, the distance is given by
		\begin{align*}
			d\bigl(f_M(0),\partial f_M(\mathbb{D})\bigr)
			&=\liminf_{r\to 1}|f(-r)|\\
			&=1+2M\sum_{n=2}^{\infty}\frac{(-1)^{n-1}}{n(n-1)}\\
			&=1+2M(1-2\ln 2).
		\end{align*}
		
		It is easy to show that \(f_M\in P_{\mathcal{H}}^0(M)\), and for \(f=f_M\), we have
	\begin{align*}
			d\bigl(f_M(0),\partial f_M(\mathbb{D})\bigr)
		=1+2M(1-2\ln 2).
	\end{align*}
	
		For \(|z|=r>r_f(M)\) and \(f=f_{M}\), a straightforward computation using \eqref{Eq-2.6} shows that
		\begin{align*}
			B(f_{M},\varphi,r)
			&=r\varphi_{0}(r)+\sum_{n=2}^{\infty} (|a_n|+|b_n|)\,\varphi_n(r)\\
			&>r_f(M)\varphi_{0}\bigl(r_f(M)\bigr)
			+\sum_{n=2}^{\infty} (|a_n|+|b_n|)\,
			\varphi_n\bigl(r_f(M)\bigr)\\
			&=1+2M(1-2\ln 2)\\
			&=d\bigl(f_{M}(0),\partial f_{M}(\mathbb{D})\bigr).
		\end{align*}
		
		This proves that \(r_f(M)\) is best possible. This completes the proof.
\end{proof}
\section{\bf Bohr inequality and its generalization with new basis for the class $\mathcal{W}_{\mathcal{H}}^{0}(\alpha,\beta)$}
For $\alpha \geq 0$ and $0 \leq \beta < 1$, let
$\mathcal{W}_{\mathcal{H}}^{0}(\alpha,\beta)$ denote the class of harmonic
mappings $f=h+\overline{g}$, which is defined by
\[
\mathcal{W}_{\mathcal{H}}^{0}(\alpha,\beta)
=
\left\{
f=h+\overline{g}\in\mathcal{H}^{0}:
{\rm Re}\bigl(h'(z)+\alpha z h''(z)-\beta\bigr)
>
\left|g'(z)+\alpha z g''(z)\right|,\;z\in\mathbb{D}
\right\}.
\]
We observe that the class $\mathcal{W}_{\mathcal{H}}^{0}(\alpha,\beta)$
generalizes several previously studied classes of harmonic mappings, as
$\mathcal{W}_{\mathcal{H}}^{0}(\alpha,0)\equiv
\mathcal{W}_{\mathcal{H}}^{0}(\alpha)\ \text{(see \cite{Ghosh-IJMMS-2020})},
\mathcal{W}_{\mathcal{H}}^{0}(0,\beta)\equiv
\mathcal{P}_{\mathcal{H}}^{0}(\beta)\ \text{(see \cite{Li-CMB-2017})},$
$ \mathcal{W}_{\mathcal{H}}^{0}(1,0)\equiv
\mathcal{W}_{\mathcal{H}}^{0}\ \text{(see \cite{Prajapat-AEJM-2019})},\mathcal{W}_{\mathcal{H}}^{0}(0,0)\equiv
\mathcal{P}_{\mathcal{H}}^{0}\ \text{(see \cite{Li-MSP-2016})}.$\vspace{1.5mm}

The following coefficient estimates and growth theorems for the class $\mathcal{W}_{\mathcal{H}}^{0}(\alpha,\beta)$ serve as essential lemmas for establishing our main theorems.
\begin{lemC}\cite{RajbalaPrajapat2021}
Let $f=h+\overline{g}\in \mathcal{W}_{\mathcal H}^{0}(\alpha,\beta)$ be of the
form \eqref{Eq-1.2} with $b_1=0$. Then for $n\ge 2$, we have
\begin{enumerate}
	\item[(i)] $\displaystyle |a_n| + |b_n|\leq \frac{2(1-\beta)}
	{n\bigl(1+\alpha(n-1)\bigr)}; $\\[1mm]
	
	\item[(ii)] $\displaystyle ||a_n| - |b_n||\leq \frac{2(1-\beta)}
	{n\bigl(1+\alpha(n-1)\bigr)};$\\[1mm]
	
	\item[(iii)] $\displaystyle |a_n|\leq \frac{2(1-\beta)}
	{n\bigl(1+\alpha(n-1)\bigr)}.$
\end{enumerate}

All these results are sharp for the function
\begin{align*}
	f(z)
	=
	z+\sum_{n=2}^{\infty}
	\frac{2(1-\beta)}
	{n\bigl(1+\alpha(n-1)\bigr)}
	\,z^n .
\end{align*}
\end{lemC}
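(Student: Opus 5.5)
The plan is to prove Lemma C by converting the defining condition of $\mathcal{W}_{\mathcal H}^{0}(\alpha,\beta)$ into a statement about a one-parameter family of analytic functions with positive real part, and then applying the Carathéodory coefficient bound. Write $H(z)=h'(z)+\alpha z h''(z)$ and $G(z)=g'(z)+\alpha z g''(z)$. From \eqref{Eq-1.2} we have
\begin{align*}
H(z)=1+\sum_{n=2}^{\infty} n\bigl(1+\alpha(n-1)\bigr)a_n z^{n-1},\qquad G(z)=\sum_{n=2}^{\infty} n\bigl(1+\alpha(n-1)\bigr)b_n z^{n-1}.
\end{align*}
The hypothesis reads $\operatorname{Re}(H(z)-\beta)>|G(z)|$. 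Hence, for every real $\theta$, $\operatorname{Re}\bigl(H(z)-\beta+e^{i\theta}G(z)\bigr)>0$ on $\mathbb{D}$.

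The next step is to normalize. Set
\begin{align*}
p_\theta(z)=\frac{H(z)+e^{i\theta}G(z)-\beta}{1-\beta}=1+\sum_{n=2}^{\infty}\frac{n\bigl(1+\alpha(n-1)\bigr)}{1-\beta}\,(a_n+e^{i\theta}b_n)\,z^{n-1}.
\end{align*}
This uses $b_1=0$, so that $G(0)=0$. Then $p_\theta(0)=1$ and $\operatorname{Re}p_\theta>0$ on $\mathbb{D}$, so by Carathéodory's lemma each Taylor coefficient of $p_\theta$ has modulus at most $2$. This gives
\begin{align*}
|a_n+e^{i\theta}b_n|\le \frac{2(1-\beta)}{n\bigl(1+\alpha(n-1)\bigr)}\quad\text{for all }\theta .
\end{align*}
Choosing $\theta$ so that $e^{i\theta}b_n$ has the same argument as $a_n$ yields (i). Choosing it with the opposite argument gives $\bigl||a_n|-|b_n|\bigr|\le|a_n+e^{i\theta}b_n|$, which yields (ii). Part (iii) follows trivially from (i). Equivalently, one could phrase the argument via the subordination $H+e^{i\theta}G\prec \beta+(1-\beta)\frac{1+z}{1-z}$ together with Rogosinski's coefficient theorem. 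The direct Carathéodory route is simpler.

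For sharpness, take $g\equiv0$ and $h$ defined by $h'(z)+\alpha zh''(z)=\beta+(1-\beta)\frac{1+z}{1-z}$. Comparing coefficients gives exactly $a_n=\frac{2(1-\beta)}{n(1+\alpha(n-1))}$, which is the stated extremal function. Since $\operatorname{Re}\frac{1+z}{1-z}>0$, this function lies in the class, and it attains equality in (i)–(iii).

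The only delicate point, which I expect to be the main obstacle, is justifying that $\operatorname{Re}(H-\beta)>|G|$ transfers to every rotation $H+e^{i\theta}G$. This holds because $\operatorname{Re}(e^{i\theta}G)\ge-|G|$, so the rotated function is still in the Carathéodory class for every fixed $\theta$. The remaining steps are coefficient bookkeeping.
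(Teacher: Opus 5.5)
The paper does not prove Lemma C; it quotes the estimate from \cite{RajbalaPrajapat2021}, so there is no in-paper argument to compare against. Your proof is correct and complete, and it is the standard route for classes of this type. For each $\theta$ you have $\operatorname{Re}\bigl(H-\beta+e^{i\theta}G\bigr)\ge\operatorname{Re}(H-\beta)-|G|>0$. Normalizing by $1-\beta>0$, with $b_1=0$ giving $p_\theta(0)=1$, Carath\'eodory's lemma yields $|a_n+e^{i\theta}b_n|\le 2(1-\beta)/\bigl(n(1+\alpha(n-1))\bigr)$ for every $\theta$, which gives (i). Parts (ii) and (iii) then follow from (i) by the triangle inequality alone, so the separate choice of $\theta$ for (ii) is not needed.

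Your extremal function, with $g\equiv 0$ and $h'+\alpha zh''=\beta+(1-\beta)\frac{1+z}{1-z}$, is well defined because its coefficients are $O(1/n)$, so $h$ is analytic in $\mathbb{D}$. It lies in the class and attains all three bounds.
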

\begin{lemD}\cite{RajbalaPrajapat2021}
If $f=h+\overline{g}\in \mathcal{W}_{\mathcal H}^{0}(\alpha,\beta)$, then
\begin{align}
	|z|
	+
	2\sum_{n=2}^{\infty}
	\frac{(-1)^{\,n-1}(1-\beta)|z|^{n}}
	{\alpha n^{2}+n(1-\alpha)}
	\leq
	|f(z)|
	\leq
	|z|
	+
	2\sum_{n=2}^{\infty}
	\frac{(1-\beta)|z|^{n}}
	{\alpha n^{2}+n(1-\alpha)}.
\end{align}

Both the inequalities are sharp when
\begin{align*}
	f(z)
	=
	z
	+
	\sum_{n=2}^{\infty}
	\frac{2(1-\beta)}
	{\alpha n^{2}+n(1-\alpha)}
	\,z^{n},
\end{align*}
or its rotations.
\end{lemD}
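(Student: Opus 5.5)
The plan is to reduce the harmonic condition to a family of analytic problems. Fix $\epsilon$ with $|\epsilon|=1$ and put $F_\epsilon=h+\epsilon g$. The defining inequality of $\mathcal{W}_{\mathcal{H}}^{0}(\alpha,\beta)$ gives
\[
{\rm Re}\bigl(F_\epsilon'(z)+\alpha zF_\epsilon''(z)-\beta\bigr)\ge {\rm Re}\bigl(h'+\alpha zh''-\beta\bigr)-\bigl|g'+\alpha zg''\bigr|>0 .
\]
Since $g'(0)=0$, we have $F_\epsilon'(0)=1$. Hence
\[
F_\epsilon'(z)+\alpha zF_\epsilon''(z)=\beta+(1-\beta)p_\epsilon(z),
\]
where $p_\epsilon$ is a Carath\'eodory function with $p_\epsilon(0)=1$. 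Comparing coefficients gives $n(1+\alpha(n-1))(a_n+\epsilon b_n)=(1-\beta)c_{n-1}$ with $|c_{n-1}|\le 2$, which is exactly where Lemma C comes from. Note also that $\alpha n^2+n(1-\alpha)=n(1+\alpha(n-1))$, so the denominators in Lemma D and Lemma C agree.

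Next I would solve the differential relation. For $\alpha>0$, $(z^{1/\alpha}F_\epsilon'(z))'=\alpha^{-1}z^{1/\alpha-1}\bigl(\beta+(1-\beta)p_\epsilon(z)\bigr)$, so
\[
F_\epsilon'(z)=\frac1\alpha\int_0^1 t^{1/\alpha-1}\bigl(\beta+(1-\beta)p_\epsilon(tz)\bigr)\,dt .
\]
For $\alpha=0$ this is simply $F_\epsilon'=\beta+(1-\beta)p_\epsilon$. Use the Carath\'eodory bounds
\[
\frac{1-t\rho}{1+t\rho}\le {\rm Re}\,p_\epsilon(tz)\le |p_\epsilon(tz)|\le\frac{1+t\rho}{1-t\rho},\qquad |z|=\rho .
\]
Expanding in power series and integrating termwise then yields
\[
1+\sum_{n=2}^{\infty}\frac{2(1-\beta)(-\rho)^{n-1}}{1+\alpha(n-1)}\le {\rm Re}\,F_\epsilon'(z)\le|F_\epsilon'(z)|\le 1+\sum_{n=2}^{\infty}\frac{2(1-\beta)\rho^{n-1}}{1+\alpha(n-1)} .
\]
Because $\epsilon$ is arbitrary, $|h'|+|g'|=\max_\epsilon|F_\epsilon'|$ and $|h'|-|g'|=\min_\epsilon|F_\epsilon'|\ge\min_\epsilon{\rm Re}\,F_\epsilon'$. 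So the same two bounds hold for $|h'(z)|\pm|g'(z)|$.

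For the upper bound I would integrate along the radial segment $[0,z]$, using $|f(z)|\le\int_0^{r}\bigl(|h'(\rho e^{i\theta})|+|g'(\rho e^{i\theta})|\bigr)\,d\rho$. Integrating the series termwise in $\rho$ produces $r+\sum_{n\ge2}2(1-\beta)r^n/\bigl(n(1+\alpha(n-1))\bigr)$.

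The lower bound is the main obstacle, because it needs univalence. Here I use $\beta\ge0$: then ${\rm Re}\,F_\epsilon'>0$ for every $|\epsilon|=1$, so each $F_\epsilon$ is close-to-convex, hence univalent. By the Clunie--Sheil-Small criterion, $f$ is then univalent and sense-preserving. Let $\Gamma$ be the preimage under $f$ of the radial segment from $0$ to $f(z)$. Then
\[
|f(z)|=\int_\Gamma|df|\ge\int_\Gamma\bigl(|h'(\zeta)|-|g'(\zeta)|\bigr)\,|d\zeta| .
\]
The lower bound for $|h'|-|g'|$ is a function $\psi(|\zeta|)$. I need $\psi>0$ on $[0,1)$ so that the integrand stays positive and the estimate $\int_\Gamma\psi(|\zeta|)|d\zeta|\ge\int_0^r\psi(\rho)\,d\rho$ applies, since $|\zeta|$ runs from $0$ to $r$ along $\Gamma$. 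For $\alpha>0$ this positivity holds because the integrand $\beta+(1-\beta)(1-t\rho)/(1+t\rho)$ is positive. For $\alpha=0$ it is $\beta+(1-\beta)(1-\rho)/(1+\rho)>0$. Integrating gives $r+\sum_{n\ge2}2(1-\beta)(-1)^{n-1}r^n/\bigl(n(1+\alpha(n-1))\bigr)$, which is the left-hand side of Lemma D.

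For sharpness take $g\equiv0$ and $p(z)=(1+z)/(1-z)$, so that $h$ is exactly the displayed extremal function with all coefficients positive. Then $|h(r)|$ equals the upper bound and $|h(-r)|$ equals the lower bound, and rotations give the same conclusion on every circle $|z|=r$.
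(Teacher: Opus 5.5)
The paper does not prove Lemma D. It is quoted from Rajbala--Prajapat and used only as a black box in the proof of Theorem~\ref{Thm-2.2}, so there is no in-paper argument to compare with.

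Your proof follows the standard route for classes of this kind:
\begin{enumerate}
\item pass to $F_\epsilon=h+\epsilon g$;
\item apply the Carath\'eodory bounds inside the integral representation of $F_\epsilon'$;
\item integrate radially for the upper bound;
\item combine Clunie--Sheil-Small univalence with a preimage-of-a-segment argument for the lower bound.
\end{enumerate}
The computations are correct. This includes the series for $\psi$ and its upper counterpart, and the positivity of $\psi$, which uses $0\le\beta<1$. It also includes the sharpness check with $g\equiv 0$ and $p(z)=(1+z)/(1-z)$: there $|h(-r)|$ equals the lower bound because that bound is positive.

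One step needs repair. For an arbitrary $z$, the segment $[0,f(z)]$ is guaranteed to lie in $f(\mathbb{D})$ only if $f(\mathbb{D})$ is starlike with respect to $0$. You have not shown this, so $\Gamma$ may not exist.

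The fix is to work at a point of minimum modulus:
\begin{enumerate}
\item Fix $r$ and pick $z_0$ with $|z_0|=r$ and $|f(z_0)|=\min_{|\zeta|=r}|f(\zeta)|$.
\item Since $f$ is a homeomorphism of $\mathbb{D}$ onto $f(\mathbb{D})$, the set $f(\{|\zeta|<r\})$ is a domain containing $0$ whose boundary is $f(\{|\zeta|=r\})$.
\item Hence the disk $\{|w|<|f(z_0)|\}$ lies in $f(\{|\zeta|<r\})$, and the preimage of $[0,f(z_0)]$ is a curve from $0$ to $z_0$.
\item Your estimate then gives $|f(z_0)|\ge\int_0^r\psi(\rho)\,d\rho$, and $|f(z)|\ge|f(z_0)|$ for all $|z|=r$.
\end{enumerate}

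A smaller point: the identity $|h'|-|g'|=\min_\epsilon|F_\epsilon'|$ presupposes $|h'|\ge|g'|$. Write instead $|h'|-|g'|\ge{\rm Re}\,h'-|g'|=\min_\epsilon{\rm Re}\,F_\epsilon'\ge\psi(|z|)>0$. This also gives sense-preservation directly.
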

We next establish the sharp Bohr-type inequality for functions in the 
class $\mathcal{W}_{\mathcal{H}_0}(\alpha,\beta)$. By employing 
Lemmas~C and~D, we determine the sharp Bohr radius 
as follows.
\begin{thm}\label{Thm-2.2}
		Let $f \in \mathcal{W}_{\mathcal H}^{0}(\alpha,\beta)$ be given by \eqref{Eq-1.2}  and let $\{\varphi_n(r)\}$ be a sequence of
	nonnegative increasing functions differentiable in $(0,1)$ such that the
	series $\sum_{n=0}^{\infty}\varphi_n(r)
	$ converges locally uniformly with respect to $r\in[0,1)$, and
	\begin{align}\label{Eq-2.9}
	\sum_{n=2}^{\infty}\frac{\varphi_n(0)}{n\bigl(1+\alpha(n-1))}
	<\frac{1}{2(1-\beta)}
	+
	\sum_{n=2}^{\infty}
	\frac{(-1)^{\,n-1}}
	{\alpha n^{2}+n(1-\alpha)}.
	\end{align}
	Then the inequality \eqref{Eqnn-2.2}
	holds for all $|z|=r\leq r_f(\alpha,\beta)$, where $r_f(\alpha,\beta)$ is the unique root
	of the equation
	\begin{align*}
		r\varphi_0(r)
		+
		2(1-\beta)\sum_{n=2}^{\infty}\frac{\varphi_n(r)}{n\bigl(1+\alpha(n-1))}
    	=1
		+
		2\sum_{n=2}^{\infty}
		\frac{(-1)^{\,n-1}(1-\beta)}
		{\alpha n^{2}+n(1-\alpha)}
	\end{align*}
	in $(0,1)$. The radius $r_f(\alpha,\beta)$ is best possible.
\end{thm}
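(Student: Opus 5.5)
The argument will mirror the proof of Theorem~\ref{Thm-2.1}, with Lemmas~C and~D replacing Lemmas~A and~B. Set
\[
D(\alpha,\beta):=1+2(1-\beta)\sum_{n=2}^{\infty}\frac{(-1)^{n-1}}{\alpha n^{2}+n(1-\alpha)} .
\]
Note that $\alpha n^2+n(1-\alpha)=n(1+\alpha(n-1))$, so the denominators in Lemmas~C and~D coincide.

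\textbf{Lower bound for the distance.} From the left inequality of Lemma~D, letting $|z|\to1$, I will get $\liminf_{|z|\to1}|f(z)|\ge D(\alpha,\beta)$. The alternating series converges absolutely, so the passage to the limit is legitimate by Abel's theorem or dominated convergence. Since $f(0)=0$, this gives $d(f(0),\partial f(\mathbb{D}))\ge D(\alpha,\beta)$.

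\textbf{Bound for the Bohr sum.} By Lemma~C(i), $|a_n|+|b_n|\le 2(1-\beta)/(n(1+\alpha(n-1)))$, and $\varphi_n\ge0$, so
\[
B(f,\varphi,r)\le r\varphi_0(r)+2(1-\beta)\sum_{n=2}^{\infty}\frac{\varphi_n(r)}{n(1+\alpha(n-1))}=:L_2(r)+D(\alpha,\beta),
\]
which defines $L_2$.

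\textbf{Study of $L_2$.} Multiplying \eqref{Eq-2.9} by $2(1-\beta)>0$ gives exactly $L_2(0)<0$, since $r\varphi_0(r)$ vanishes at $0$. Because every $\varphi_n$ is nonnegative and increasing, $L_2$ is nondecreasing. It is in fact strictly increasing, because $r\varphi_0(r)$ is strictly increasing whenever $\varphi_0>0$. Termwise differentiation is justified by the locally uniform convergence.

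\textbf{Main obstacle.} The delicate point is existence of a root in $(0,1)$, i.e.\ that $L_2(r)>0$ for some $r<1$. The paper simply asserts that $L_2\to+\infty$ as $r\to1^-$ (for $\varphi_n=r^n$ one only gets a finite limit). I will follow the paper: either use $L_2(1^-)>0$, which holds for the standard choices since $\lim_{r\to1}$ of the sum exceeds $D(\alpha,\beta)$, or take the existence of the root as part of the hypothesis implicit in the phrase ``the unique root''. Given a root $r_f(\alpha,\beta)$, monotonicity makes it unique and gives $L_2(r)\le0$ for $r\le r_f(\alpha,\beta)$. Therefore $B(f,\varphi,r)\le D(\alpha,\beta)\le d(f(0),\partial f(\mathbb{D}))$ for $r\le r_f(\alpha,\beta)$.

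\textbf{Sharpness.} I will use the extremal function of Lemmas~C and~D,
\[
f(z)=z+\sum_{n\ge2}\frac{2(1-\beta)}{n(1+\alpha(n-1))}\,z^n ,
\]
which lies in $\mathcal{W}^0_{\mathcal H}(\alpha,\beta)$ with $g\equiv0$. Evaluating at $z=-r$ and letting $r\to1$ gives $d(f(0),\partial f(\mathbb{D}))=D(\alpha,\beta)$: the lower bound comes from the first step, and the upper bound holds because $|f(-r)|$ attains the Lemma~D lower bound, with $f(-r)$ approaching a boundary point. For this $f$ the coefficients attain equality in Lemma~C, so $B(f,\varphi,r)=L_2(r)+D(\alpha,\beta)>D(\alpha,\beta)$ for $r>r_f(\alpha,\beta)$ by strict monotonicity. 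Hence the radius cannot be improved.
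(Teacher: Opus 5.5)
Your proposal is correct and follows the paper's proof essentially step for step: the distance bound from Lemma~D, the coefficient bound from Lemma~C, the auxiliary function $L_2$ with $L_2(0)<0$ obtained from \eqref{Eq-2.9}, and sharpness via the extremal function with $g\equiv0$ evaluated at $z=-r$. Your caveat is justified: the paper's claim that $L_2(r)\to+\infty$ fails for $\varphi_n(r)=r^n$ when $\alpha>0$, so the existence of the root must be checked or assumed separately; also, for $\alpha=0$ the alternating series converges only conditionally, not absolutely, but Abel's theorem still justifies passing to the limit.
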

By setting $\varphi_n(r)=r^n$ for all $n\geq 0$, Theorem \ref{Thm-2.2} directly implies the following corollary. Notably, this specific choice of weight functions reduces our general framework to the classical setting, thereby recovering a previously established theorem as a special case. Theorem~\ref{Thm-2.2} may therefore be viewed as a genuine extension of this known result.
\begin{cor}\cite{RajbalaPrajapat2021}
		Let $f \in \mathcal{W}_{\mathcal H}^{0}(\alpha,\beta)$ be given by \eqref{Eq-1.2}. Then the inequality 
	\begin{align*}
		r
		+
		\sum_{n=2}^{\infty}
		\bigl(|a_n|+|b_n|\bigr)r^n	\leq d\bigl(f(0),\partial f(\mathbb{D})\bigr)
	\end{align*} 
	holds for \(|z|=r\le r_1(\alpha,\beta)\), where \(r_1(\alpha,\beta)\) is the unique root of
	\begin{align*}
		r
		+
		2(1-\beta)\sum_{n=2}^{\infty}\frac{r^n}{n\bigl(1+\alpha(n-1))}
		=1
		+
		2\sum_{n=2}^{\infty}
		\frac{(-1)^{\,n-1}(1-\beta)}
		{\alpha n^{2}+n(1-\alpha)}.
	\end{align*}
	in \((0,1)\). The radius \(r_1(\alpha,\beta)\) is best possible.
\end{cor}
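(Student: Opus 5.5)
The corollary follows by specializing Theorem~\ref{Thm-2.2} to $\varphi_n(r)=r^n$ for all $n\ge 0$. The work is to check that this choice meets the hypotheses of Theorem~\ref{Thm-2.2}, and then to read off the inequality and its sharpness.

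\emph{Hypotheses.} Each $r^n$ is nonnegative, differentiable and nondecreasing on $(0,1)$. The series $\sum r^n$ converges locally uniformly on $[0,1)$. Since $\varphi_n(0)=0$ for $n\ge 2$, condition \eqref{Eq-2.9} becomes
\[
0<\frac{1}{2(1-\beta)}+\sum_{n=2}^{\infty}\frac{(-1)^{n-1}}{\alpha n^2+n(1-\alpha)},
\]
which is the main point to verify. The series is alternating with terms $1/(n(1+\alpha(n-1)))$, which decrease in $n$. Hence its sum lies between $-1/(2(1+\alpha))$ and $0$, and so it is at least $-1/2$. Since $\beta\ge 0$, we have $1/(2(1-\beta))\ge 1/2$, so the right-hand side is nonnegative. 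Equality would require both $\beta=0$ and the series equal to $-1/2$. The series equals $-1/2$ only in the degenerate limit, because for $\alpha\ge 0$ the alternating sum is strictly greater than its first term $-1/(2(1+\alpha))\ge -1/2$. So the inequality is strict.

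A remark is needed here. When $\varphi_0(r)=1$, the first term is $r\varphi_0(r)=r$, and the proof of Theorem~\ref{Thm-2.1}, adapted to Theorem~\ref{Thm-2.2}, needs $L(0)<0$. Here $L(0)=-\bigl(1+2(1-\beta)S\bigr)$, where $S$ denotes the alternating sum above. This is exactly the positivity just established.

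\emph{Conclusion.} Substituting $\varphi_n(r)=r^n$ into the defining equation of $r_f(\alpha,\beta)$ gives the displayed equation for $r_1(\alpha,\beta)$, since $\alpha n^2+n(1-\alpha)=n(1+\alpha(n-1))$. The left side of that equation increases strictly from $0$ to $+\infty$ as $r\to1^-$: the terms behave like $r^n/(\alpha n^2)$, and when $\alpha=0$ like $r^n/n$, which diverges logarithmically. The right side is a positive constant, so the root in $(0,1)$ is unique. Inequality \eqref{Eqnn-2.2} is then precisely the stated Bohr inequality.

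\emph{Sharpness.} This is inherited from Theorem~\ref{Thm-2.2}, with the extremal function of Lemma~D evaluated at $z=-r$. Its distance to the boundary equals the right-hand constant, and its Bohr sum exceeds that constant for $r>r_1(\alpha,\beta)$. The only delicate step is the positivity of the right-hand constant, discussed above.
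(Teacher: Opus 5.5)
Your route is the paper's: the corollary is Theorem~\ref{Thm-2.2} with $\varphi_n(r)=r^n$. Your check of \eqref{Eq-2.9} is correct, and the paper does not carry it out. The alternating sum $S=\sum_{n\ge2}(-1)^{n-1}/(n(1+\alpha(n-1)))$ lies in $(-1/(2(1+\alpha)),0)$, so $1+2(1-\beta)S>0$. This genuinely uses $\beta\ge 0$, which is the standing assumption of Section~3.

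The step that fails is your justification that the root exists. You say the left-hand side tends to $+\infty$ as $r\to1^-$. For $\alpha>0$ your own comparison with $r^n/(\alpha n^2)$ shows the opposite, because $\sum_{n\ge2}1/(n(1+\alpha(n-1)))<\infty$. The left side in fact tends to the finite value $1+2(1-\beta)\sum_{n\ge2}1/(n(1+\alpha(n-1)))$. The proof of Theorem~\ref{Thm-2.2} rests on the same assertion $L_2(r)\to+\infty$, so you cannot simply inherit existence from it.

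The repair is short:
\begin{itemize}
\item The right-hand constant $c=1+2(1-\beta)S$ satisfies $0<c<1$, since $S<0$.
\item The left side is strictly larger than $r$ for every $r\in(0,1)$, so at $r=c$ it exceeds $c$.
\item At $r=0$ the left side equals $0<c$.
\item Continuity then gives a root in $(0,c)$, and strict monotonicity gives uniqueness in $(0,1)$.
\end{itemize}
With this fix the rest of your argument goes through as in the paper, including sharpness via $f^{\beta}_{\alpha}$ at $z=-r$.
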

\begin{cor}
	Let $f \in \mathcal{W}_{\mathcal H}^{0}(\alpha,\beta)$ be given by \eqref{Eq-1.2}. Let $\varphi_0(r)=1\;
	\text{and}\;
	\varphi_n(r)=(n+1)r^n,\quad n\geq 1.$ Then the inequality 
	\begin{align*}
		r
		+
		\sum_{n=2}^{\infty}
		\bigl(|a_n|+|b_n|\bigr)(n+1)r^n	\leq d\bigl(f(0),\partial f(\mathbb{D})\bigr)
	\end{align*} holds for $|z|=r\le r_2(\alpha,\beta)$ , where $r_2(\alpha,\beta)$ is the unique root of the equation
	\begin{align*}
		r
		+
		2(1-\beta)\sum_{n=2}^{\infty}\frac{(n+1)r^n}{n\bigl(1+\alpha(n-1))}
		=1
		+
		2\sum_{n=2}^{\infty}
		\frac{(-1)^{\,n-1}(1-\beta)}
		{\alpha n^{2}+n(1-\alpha)}.
	\end{align*}
	in \((0,1)\). The radius $r_2(\alpha,\beta)$ is best possible.
\end{cor}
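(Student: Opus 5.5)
The proof will follow the same route as the proof of Theorem~\ref{Thm-2.1}, with Lemmas~C and~D taking the place of Lemmas~A and~B. First we bound the distance to the boundary from below. Lemma~D gives, for every $f\in\mathcal{W}_{\mathcal H}^{0}(\alpha,\beta)$,
\[
|f(z)|\ge |z|+2(1-\beta)\sum_{n=2}^{\infty}\frac{(-1)^{n-1}|z|^n}{\alpha n^2+n(1-\alpha)}.
\]
The alternating series converges uniformly on $[0,1]$, since $\alpha n^2+n(1-\alpha)=n(1+\alpha(n-1))\ge n$ and so the terms are of order $1/n^2$ when $\alpha>0$. For $\alpha=0$ we use Abel's theorem on the alternating series $\sum (-1)^{n-1}/n$. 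Letting $|z|\to1$ and using $f(0)=0$, we get
\[
d\bigl(f(0),\partial f(\mathbb{D})\bigr)=\liminf_{|z|\to1}|f(z)|\ge 1+2(1-\beta)S_\alpha,\qquad S_\alpha:=\sum_{n=2}^{\infty}\frac{(-1)^{n-1}}{\alpha n^2+n(1-\alpha)}.
\]

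Next we set up the radius equation. Define
\[
L_2(r)=r\varphi_0(r)+2(1-\beta)\sum_{n=2}^{\infty}\frac{\varphi_n(r)}{n(1+\alpha(n-1))}-1-2(1-\beta)S_\alpha .
\]
Hypothesis \eqref{Eq-2.9}, multiplied by $2(1-\beta)>0$, says exactly that $L_2(0)<0$. Each $\varphi_n$ is nonnegative and increasing, and $r\varphi_0(r)$ is strictly increasing wherever $\varphi_0>0$. So $L_2$ is increasing, and termwise differentiation is justified by local uniform convergence. This gives at most one zero.

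For existence of a root in $(0,1)$ we need $L_2(r)>0$ for some $r<1$. I expect this to be the delicate step, as it was in Theorem~\ref{Thm-2.1}, where the paper simply asserts $L_1(r)\to+\infty$. The cleanest fix is to read this as part of the hypothesis that the root exists, as the statement implicitly does. In the model case $\varphi_n=r^n$ it can be checked directly: the limit at $1$ is $1+2(1-\beta)\sum_{n\ge2} 1/(n(1+\alpha(n-1)))$, which exceeds $1+2(1-\beta)S_\alpha$. Once the root $r_f(\alpha,\beta)$ exists, monotonicity gives $L_2(r)\le0$ for $r\le r_f(\alpha,\beta)$. Lemma~C(i), $|a_n|+|b_n|\le 2(1-\beta)/(n(1+\alpha(n-1)))$, then yields
\[
B(f,\varphi,r)\le r\varphi_0(r)+2(1-\beta)\sum_{n\ge2}\frac{\varphi_n(r)}{n(1+\alpha(n-1))}\le 1+2(1-\beta)S_\alpha\le d\bigl(f(0),\partial f(\mathbb{D})\bigr).
\]

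For sharpness we take the extremal function of Lemmas~C and~D,
\[
f_{\alpha,\beta}(z)=z+\sum_{n\ge2}\frac{2(1-\beta)}{n(1+\alpha(n-1))}z^n,
\]
with $g\equiv0$. It lies in the class because $h'+\alpha zh''=1+2(1-\beta)\sum_{n\ge1} z^n=\beta+(1-\beta)\frac{1+z}{1-z}$, whose real part exceeds $\beta$. Evaluating at $z=-r$ and letting $r\to1$ shows that the lower bound in Lemma~D is attained, so $d\bigl(f_{\alpha,\beta}(0),\partial f_{\alpha,\beta}(\mathbb{D})\bigr)=1+2(1-\beta)S_\alpha$. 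This needs the minimum modulus on circles to be realized on the negative axis, which is the sharpness claim of Lemma~D.

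For this $f_{\alpha,\beta}$ the Bohr sum $B(f_{\alpha,\beta},\varphi,r)$ equals $L_2(r)+1+2(1-\beta)S_\alpha$. By strict monotonicity of $L_2$, for $r>r_f(\alpha,\beta)$ it exceeds the distance. Hence the radius cannot be enlarged.
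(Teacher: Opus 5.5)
Your route is the paper's: the corollary is Theorem~\ref{Thm-2.2} specialized, using Lemma~D for the lower bound on the distance, Lemma~C(i) for the coefficients, monotonicity of $L_2$, and the extremal map with $g\equiv 0$. Those steps are correct, including your membership check $h'+\alpha zh''=\beta+(1-\beta)\frac{1+z}{1-z}$. The gap is that you argue for a general sequence $\{\varphi_n\}$ and never specialize to $\varphi_0\equiv 1$, $\varphi_n(r)=(n+1)r^n$. As a result, the two inputs the general argument needs are not supplied for this statement.

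First, you obtain $L_2(0)<0$ from hypothesis \eqref{Eq-2.9}, but that hypothesis is not assumed in this corollary. Since $\varphi_n(0)=0$ for $n\ge 1$, it reduces to $1+2(1-\beta)S_\alpha>0$, and this must be proved. The terms $t_n=1/(n(1+\alpha(n-1)))$ decrease strictly, so the alternating series gives $S_\alpha>-t_2=-\frac{1}{2(1+\alpha)}$. Hence $1+2(1-\beta)S_\alpha>1-\frac{1-\beta}{1+\alpha}\ge 0$ for $\alpha\ge 0$, $0\le\beta<1$.

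Second, existence of $r_2(\alpha,\beta)$ in $(0,1)$ is part of the conclusion (``is the unique root''), so you cannot read it in as a hypothesis. Moreover, the case you actually verify, $\varphi_n=r^n$, is the preceding corollary, not this one. For the present weights existence is immediate. Since $(n+1)/n>1$,
\[
\sum_{n=2}^{\infty}\frac{(n+1)r^n}{n\bigl(1+\alpha(n-1)\bigr)}\;\ge\;\sum_{n=2}^{\infty}\frac{r^n}{1+\alpha(n-1)}\;\longrightarrow\;+\infty\qquad (r\to 1^-),
\]
because $\sum_{n\ge 2}1/(1+\alpha(n-1))$ diverges for every $\alpha\ge 0$ (monotone convergence). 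So here $L_2(r)\to+\infty$ genuinely holds, exactly as the paper asserts. Adding these two checks completes your proof.

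A minor point: in the sharpness step you need no claim about where the minimum modulus is attained. The inequality $d\bigl(f_{\alpha,\beta}(0),\partial f_{\alpha,\beta}(\mathbb{D})\bigr)\le\lim_{r\to1^-}|f_{\alpha,\beta}(-r)|=1+2(1-\beta)S_\alpha$, combined with your lower bound from Lemma~D, already gives equality.
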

The class \(\mathcal{W}_{\mathcal H}^{0}(\alpha,\beta)\) serves as a natural generalization of
\(\mathcal{W}_{\mathcal H}^{0}(\alpha)\). Indeed, choosing \(\beta=0\) in the defining inequality yields $\mathcal{W}_{\mathcal H}^{0}(\alpha,\beta)
=
\mathcal{W}_{\mathcal H}^{0}(\alpha).$
Therefore, as an immediate consequence of our general setting, we recover the following theorem due to Allu and Halder \cite{Allu-Halder-2021}. 

\begin{cor}\cite[Theorem 2.6]{Allu-Halder-2021}
Let $f \in \mathcal{W}_{\mathcal H}^{0}(\alpha)$ be given by \eqref{Eq-1.2}. Then the inequality 
\begin{align*}
	r
	+
	\sum_{n=2}^{\infty}
	\bigl(|a_n|+|b_n|\bigr)r^n	\leq d\bigl(f(0),\partial f(\mathbb{D})\bigr)
\end{align*} 
holds for \(|z|=r\le r_1(\alpha)\), where \(r_1(\alpha)\) is the unique root of
\begin{align*}
	r
	+
	2\sum_{n=2}^{\infty}\frac{r^n}{n\bigl(1+\alpha(n-1))}
	=1
	+
	2\sum_{n=2}^{\infty}
	\frac{(-1)^{\,n-1}}
	{\alpha n^{2}+n(1-\alpha)}.
\end{align*}
in \((0,1)\). The radius \(r_1(\alpha)\) is best possible.	
\end{cor}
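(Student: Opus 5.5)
The plan is to obtain this corollary directly from Theorem~\ref{Thm-2.2} by specializing the parameters and the weight sequence. First, take $\beta=0$. By the definition of $\mathcal{W}_{\mathcal H}^{0}(\alpha,\beta)$, the defining inequality then becomes ${\rm Re}\bigl(h'(z)+\alpha z h''(z)\bigr)>\left|g'(z)+\alpha z g''(z)\right|$. This is exactly the class $\mathcal{W}_{\mathcal H}^{0}(\alpha)$, so every $f\in\mathcal{W}_{\mathcal H}^{0}(\alpha)$ satisfies the hypothesis of Theorem~\ref{Thm-2.2} with $1-\beta=1$.

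Next, choose $\varphi_n(r)=r^n$ for all $n\ge 0$, so that $\varphi_0(r)=1$ and $r\varphi_0(r)=r$. The standing assumptions on the weights are immediate:
\begin{itemize}
\item each $r^n$ is nonnegative, nondecreasing and differentiable on $(0,1)$;
\item $\sum_{n\ge0} r^n$ converges locally uniformly on $[0,1)$.
\end{itemize}
The constant $\varphi_0$ is only non-strictly increasing. This is harmless, because the strict monotonicity of the auxiliary function $L(r)=r+2\sum_{n\ge2} r^n/(n(1+\alpha(n-1)))-C$ in the proof of Theorem~\ref{Thm-2.2} only needs $L'(r)\ge 1>0$.

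The one genuinely nontrivial check, and the step I expect to be the main obstacle, is condition \eqref{Eq-2.9}. Since $\varphi_n(0)=0$ for $n\ge2$, it reduces to
\begin{align*}
\frac12+\sum_{n=2}^{\infty}\frac{(-1)^{n-1}}{\alpha n^{2}+n(1-\alpha)}>0 .
\end{align*}
I would argue via the alternating series test. Write $\alpha n^2+n(1-\alpha)=n(1+\alpha(n-1))$; this is positive and strictly increasing in $n$ for $\alpha\ge0$. Hence the terms have strictly decreasing moduli tending to $0$, and the first term ($n=2$) is $-1/(2(1+\alpha))$. The alternating series estimate then places the sum in $\bigl[-1/(2(1+\alpha)),0\bigr)$. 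Consequently the left-hand side is at least $\frac12-\frac{1}{2(1+\alpha)}=\frac{\alpha}{2(1+\alpha)}\ge0$, with strict inequality once the second term ($n=3$, which is positive) is taken into account. In particular the right-hand side $1+2\sum_{n\ge2}(-1)^{n-1}/(\alpha n^2+n(1-\alpha))$ of the radius equation is positive. This also guarantees that $L(0)<0$, so the root lies in $(0,1)$.

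With the hypotheses verified, Theorem~\ref{Thm-2.2} yields the inequality for $r\le r_1(\alpha)$, where $r_1(\alpha)$ is the unique root in $(0,1)$ of the displayed equation; this is the equation of Theorem~\ref{Thm-2.2} with $\beta=0$ and $\varphi_n=r^n$. Sharpness is inherited as well. The extremal function used there is $f(z)=z+\sum_{n\ge2}\frac{2}{n(1+\alpha(n-1))}z^n$, which is the function of Lemma~D with $\beta=0$ and lies in $\mathcal{W}_{\mathcal H}^{0}(\alpha)$. For this $f$ the distance $d(f(0),\partial f(\mathbb{D}))$ equals the right-hand side, attained along $z=-r\to-1$, and the Bohr sum exceeds it for every $r>r_1(\alpha)$. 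Hence $r_1(\alpha)$ cannot be improved.
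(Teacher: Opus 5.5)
Your proposal is correct and follows the paper's route: the paper obtains this corollary as the case $\beta=0$, $\varphi_n(r)=r^n$ of Theorem~\ref{Thm-2.2}, and your alternating-series check of \eqref{Eq-2.9} (and hence of $L(0)<0$) supplies a verification the paper leaves implicit. One small point to tighten: $L(0)<0$ alone does not give a root in $(0,1)$, and the claim $L(r)\to+\infty$ in the proof of Theorem~\ref{Thm-2.2} fails here for $\alpha>0$ because $\sum_{n\ge2}1/(n(1+\alpha(n-1)))<\infty$; existence still holds, since $\lim_{r\to1^-}L(r)=4\sum_{k\ge1}\frac{1}{2k(1+\alpha(2k-1))}>0$.
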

\begin{proof}[\bf Proof of the Theorem \ref{Thm-2.2}]
	Let $f\in \mathcal{W}_{\mathcal H}^{0}(\alpha,\beta)$. Then, in view of Lemma D, we have
\begin{align}\label{Eq-2.10}
	|f(z)|
	\geq
	|z|
 +
2\sum_{n=2}^{\infty}
\frac{(-1)^{\,n-1}(1-\beta)|z|^{n}}
{\alpha n^{2}+n(1-\alpha)},
	\qquad |z|<1.
\end{align}

By taking $\liminf$ as $|z|\to 1$ on both sides of \eqref{Eq-2.10}, we obtain
\begin{align}\label{Eqn-2.11}
	\liminf_{|z|\to 1}|f(z)|
	\geq
	1
+
2\sum_{n=2}^{\infty}
\frac{(-1)^{\,n-1}(1-\beta)}
{\alpha n^{2}+n(1-\alpha)}.
\end{align}

Since $f(0)=0$, from  \eqref{Eq-2.4} and \eqref{Eqn-2.11}, we obtain
\begin{align}\label{Eq-2.12}
	d\bigl(f(0),\partial f(\mathbb{D})\bigr)
	\geq
	1
	+
	2\sum_{n=2}^{\infty}
	\frac{(-1)^{\,n-1}(1-\beta)}
	{\alpha n^{2}+n(1-\alpha)}.
\end{align}
Let $	L_2:[0,1)\to\mathbb{R}$
be defined by
\begin{align*}
	L_2(r)
	=
	r\varphi_0(r)
	+
	2(1-\beta)\sum_{n=2}^{\infty}\frac{\varphi_n(r)}{n\bigl(1+\alpha(n-1))}
-1
-
2\sum_{n=2}^{\infty}
\frac{(-1)^{\,n-1}(1-\beta)}
{\alpha n^{2}+n(1-\alpha)}.
\end{align*}
In view of \eqref{Eq-2.9}, it is easy to see that
\begin{align*}
	L_2(0)
	=		2(1-\beta)\sum_{n=2}^{\infty}\frac{\varphi_n(0)}{n\bigl(1+\alpha(n-1))}
	-1
	-
	2(1-\beta)\sum_{n=2}^{\infty}
	\frac{(-1)^{\,n-1}}
	{\alpha n^{2}+n(1-\alpha)}<0.
\end{align*}
 On the other hand, since $	L_2(r)\to +\infty
\quad \text{as } r\to 1^{-},$
and
\begin{align*}
	L_2'(r)
	=
	r\varphi_0'(r)+\varphi_0(r)
	+
	2(1-\beta)\sum_{n=2}^{\infty}\frac{\varphi^{\prime}_n(r)}{n\bigl(1+\alpha(n-1))}
	>0
\end{align*}
for $r\in(0,1)$, it follows that $L_2(r)$ is strictly increasing on $(0,1)$.

Since $L_2(0)<0$ and $L_2(r)\to+\infty$ as $r\to1^{-}$, the monotonicity of
$L_2(r)$ implies that $L_2(r)$ has exactly one zero in $(0,1)$. Let $r_f(\alpha, \beta)$ be the
unique root of $L_2(r)$ in $(0,1)$. Then
$L_2(r_f(\alpha,\beta))=0,$
which is equivalent to
\begin{align}\label{Eq-2.13}
r_f(\alpha,\beta)\varphi_0(r_f(\alpha,\beta))
+
2(1-\beta)\sum_{n=2}^{\infty}\frac{\varphi_n(r_f(\alpha,\beta))}{n\bigl(1+\alpha(n-1))}
=1
+
2\sum_{n=2}^{\infty}
\frac{(-1)^{\,n-1}(1-\beta)}
{\alpha n^{2}+n(1-\alpha)}.
\end{align}
If \(0 \leq r < r_f(\alpha,\beta)\), then, by the monotonicity of \(L_2\), we have $L_2(r)<L_2(r_f(\alpha,\beta)).$
Hence, it follows from \eqref{Eq-2.13} that
\begin{align}\label{Eq-2.14}
	r\varphi_0(r_f(\alpha,\beta))
+
2(1-\beta)\sum_{n=2}^{\infty}\frac{\varphi_n(r_f(\alpha,\beta))}{n\bigl(1+\alpha(n-1))}
\leq1
+
2\sum_{n=2}^{\infty}
\frac{(-1)^{\,n-1}(1-\beta)}
{\alpha n^{2}+n(1-\alpha)}
\end{align}
Using Lemma C together with inequalities \eqref{Eq-2.12} and \eqref{Eq-2.14}, for
\(0<|z|=r\leq r_f(M)\), we deduce that
\begin{align*}
	B(f,\varphi,r)\leq r\varphi_0(r)
	+
	2(1-\beta)\sum_{n=2}^{\infty}\frac{\varphi_n(r)}{n\bigl(1+\alpha(n-1))}
	\le d\bigl(f(0),\partial f(\mathbb{D})\bigr)
\end{align*}
Thus the inequality in the result is established.

In order to complete the proof, we need only show that \(r_f(M)\) is best possible. To achieve this, we consider the function \(f_{M}\) defined by
\begin{align*}
f^{\beta}_{\alpha}(z)=z+\sum_{n=2}^{\infty}\frac{2(1-\beta)}{n\bigl(1+\alpha(n-1)\bigr)}\,{z}^{\,n}.
\end{align*}
It is not hard to show that \(f^{\beta}_{\alpha}\in \mathcal{W}_{\mathcal H}^{0}(\alpha,\beta)\).
Further, it can be shown that 
\[
d\bigl(f^{\beta}_{\alpha}(0),\partial f_{\alpha}(\mathbb{D})\bigr)
=1
+
2\sum_{n=2}^{\infty}
\frac{(-1)^{\,n-1}(1-\beta)}
{\alpha n^{2}+n(1-\alpha)}.
\]
For \(|z|=r>r_f(\alpha,\beta)\) and \(f^{\beta}_{\alpha}\), a straightforward computation using
\eqref{Eq-2.13} shows that
\begin{align*}
	B(f^{\beta}_{\alpha},\varphi,r)
	&= r\varphi_{0}(r)
	+\sum_{n=2}^{\infty}\bigl(|a_n|+|b_n|\bigr)\varphi_n(r) \\
	&> r_f(\alpha,\beta)\varphi_{0}\bigl(r_f(\alpha,\beta)\bigr)
	+\sum_{n=2}^{\infty}\bigl(|a_n|+|b_n|\bigr)\varphi_n\bigl(r_f(\alpha,\beta)\bigr) \\
	&=1
	+
	2\sum_{n=2}^{\infty}
	\frac{(-1)^{\,n-1}(1-\beta)}
	{\alpha n^{2}+n(1-\alpha)} \\
	&=d\bigl(f^{\beta}_{\alpha}(0),\partial f^{\beta}_{\alpha}(\mathbb{D})\bigr).
\end{align*}
This proves that \(r_f(\alpha,\beta)\) is best possible. This completes the proof.
\end{proof}
\noindent{\bf Acknowledgment:} The authors would be gratefully acknowledge the anonymous referee for their thorough comments and valuable suggestions, which have greatly improved the quality and clarity of the paper. Research of first author is supported by the Anusandhan National Research Foundation (ANRF), Government of India, under File No: SUR/2022/002244.The research of second author is supported by UGC-JRF (NTA Ref. No. 211610135410), New Delhi, India.
\vspace{1.2mm}

\noindent\textbf{Conflict of interest:} The authors declare that there is no conflict  of interest regarding the publication of this paper.\vspace{1.2mm}

\noindent\textbf{Data availability statement:}  Data sharing not applicable to this article as no datasets were generated or analysed during the current study.
\end{document}